\newtheorem{thm}{Theorem}[section]
\newtheorem{cor}[thm]{Corollary}
\newtheorem{lem}[thm]{Lemma}
\theoremstyle{remark}
\theoremstyle{definition}
\numberwithin{equation}{subsection}
\renewcommand{\bar}{\overline}
\newcommand{\cS}{\mathcal{C}}
\newcommand{\AAA}{{\sf A}}
\newcommand{\Spin}{\mathrm{Spin}}
\newcommand{\codim}{{\rm codim}\,}
\newcommand{\supp}{{\rm supp}\,}
\newcommand{\F}{{\mathbb{F}}}
\newcommand{\Q}{{\mathbb{Q}}}
\newcommand{\Z}{{\mathbb{Z}}}
\newcommand{\GL}{\mathrm{GL}}
\newcommand{\SL}{\mathrm{SL}}
\newcommand{\SU}{\mathrm{SU}}
\newcommand{\PSL}{\mathrm{PSL}}
\newcommand{\PSU}{\mathrm{PSU}}
\newcommand{\SSS}{\mathsf{S}}
\newcommand{\Sp}{\mathrm{Sp}}
\newcommand{\Irr}{\mathrm{Irr}}
\newcommand{\IBr}{\mathrm{IBr}}
\newcommand{\diag}{\mathrm{diag}}
\newcommand{\la}{\lambda}
\newcommand{\eps}{\epsilon}
\newcommand{\al}{\alpha}
\newcommand{\gam}{\gamma}
\newcommand{\om}{\omega}
\newcommand{\ulG}{\underline{G}}
\newcommand{\Alt}{{\raise 2pt\hbox{$\scriptstyle\bigwedge$}}}
\renewcommand{\mod}{\bmod \,}
\begin{document}
\title[Waring Problem]
{Waring Problem for Finite Quasisimple Groups}

\author{Michael Larsen}
\email{larsen@math.indiana.edu}
\address{Department of Mathematics\\
    Indiana University \\
    Bloomington, IN 47405\\
    U.S.A.}

\author{Aner Shalev}
\email{shalev@math.huji.ac.il}
\address{Einstein Institute of Mathematics\\
    Hebrew University \\
    Givat Ram, Jerusa\-lem 91904\\
    Israel}

\author{Pham Huu Tiep}
\email{tiep@math.arizona.edu}
\address{Department of Mathematics\\
    University of Arizona\\
    Tucson, AZ 85721\\
    U. S. A.} 

\thanks{Michael Larsen was partially supported by NSF grants DMS-0800705
and DMS-1101424.
Aner Shalev was partially supported by an ERC Advanced grant 247034. 
The first and the second named authors were partially supported 
by a Bi-National Science Foundation United States-Israel grant 2008194. 
Pham Huu Tiep was partially supported by NSF grant DMS-0901241.}

\begin{abstract}
The classical Waring problem deals with expressing every natural 
number as a sum of $g(k)$ $k$th powers. Similar problems for
finite simple groups were studied recently, and in this paper
we study them for finite quasisimple groups $G$.

We show that for a fixed group word $w \ne 1$ and large enough
$G$ we have $w(G)^3=G$, namely every element of $G$ is a product
of $3$ values of $w$. For various families of finite quasisimple
groups, including covers of alternating groups, we obtain a stronger
result, namely $w(G)^2=G$. However, in contrast with 
the case of simple groups studied in \cite{LST}, we show that $w(G)^2=G$ 
need not hold for all large $G$; moreover, if $k>2$ then $x^ky^k$ is not 
surjective on infinitely many finite quasisimple groups.

The case $k=2$ turns out to be exceptional.
Indeed, our last result shows that every element of a finite quasisimple
group is a product of two squares. This can be regarded as a non-commutative
analogue of Lagrange's four squares theorem.
\end{abstract}

\maketitle

\newpage
\section{Introduction}

The classical Waring problem deals with expressing every natural 
number as a sum of $g(k)$ $k$th powers. Analogous group-theoretic problems
were studied extensively in recent years, with emphasis on finite simple
groups, see \cite{Sh}, \cite{LSh}, \cite{LSh2}, \cite{Sh2}, \cite{NP}, 
\cite{LBST1}, \cite{LST} and the references therein.

A result proved in \cite{LST} shows that for each $k$, every element of 
a sufficiently large (non-abelian) finite simple group can be written in 
the form $x^k y^k$.  Moreover, if $k=2$ this is true with no exceptions,
namely every element of a finite simple group is a product of two
squares, as proved independently in \cite{LBST2} and in \cite{GM}. 

Can these results be extended to finite quasisimple groups?
Recall that a quasisimple group is a perfect group which is
simple modulo its center. Expressing non-identity elements
as powers, or as word values, is often harder when these elements 
are central, which makes the case of quasisimple groups more
challenging.

It is somewhat surprising that the first result mentioned above cannot
be generalized to quasisimple groups.
In particular we show in Corollary \ref{4p} below that 
{\em for any $k>2$ there are infinitely
many finite quasisimple groups $G$ such that the map $(x,y) \mapsto
x^ky^k$ from $G \times G$ to $G$ is not surjective}.
The simplest counterexample of this kind is $G= \SL_2(q)$ where
$q \equiv 3$ or $5 (\mod 8)$, and $k=4$: here the central involution
of $G$ is not a product of two $4$th powers (see Lemma \ref{sl2} below).

In contrast with these negative results, the case
$k=2$ turns out to be exceptional in a strong sense. 
We show in Theorem \ref{main5} below
that {\em every element of a finite quasisimple group is a product 
of two squares}. This general result can be regarded
as a non-commutative analogue of Lagrange's classical four squares theorem.

Powers are a particular case of {\em words}, namely elements $w$ of
free groups $F_d$ of rank $d$. Various Waring type results 
for finite simple groups generalize to any non-trivial word $w \ne 1$.
Given a word $w \in F_d$ and a group $G$ we may consider a {\em word map}
$G^d \rightarrow G$ induced by substitution, and we denote its image
by $w(G)$. For example, if $w=[x,y]$, the commutator word, then
it was shown in \cite{LBST1} that $w(G)=G$ for all finite simple
groups $G$, proving Ore's longstanding conjecture.

Word maps on finite simple groups need not be surjective
(e.g. $x^2$ is never surjective). 
However, the main theorem of \cite{LST} asserts that for any pair $w_1,w_2$ of 
fixed non-trivial words, $w_1(G)w_2(G) = G$ for all finite simple groups $G$ 
of sufficiently high order.  
This result succeeded earlier work of Shalev \cite{Sh} and Larsen-Shalev 
\cite{LSh} proving the same statement for a triple of words $w_1,w_2,w_3$.  

Theorem \ref{main1} below generalizes the earlier result to quasisimple groups:
{\em for any fixed non-trivial words $w_1, w_2, w_3$ and for all large 
enough finite quasisimple groups $G$ we have $w_1(G)w_2(G)w_3(G)=G$.}
In particular it follows that for each $k > 1$ the word map induced
by $x^k y^k z^k$ is surjective on all large enough finite quasisimple
groups.

For quasisimple groups $G$ such that $G/Z(G)$ is an alternating group
a stronger result holds. Here we show (in Theorem \ref{alt} below) that 
$w_1(G)w_2(G)=G$ provided $G$ is large enough. A similar result
(Theorems \ref{main6} and \ref{main7} below) holds for covers of odd-dimensional 
orthogonal groups and of centerless even-dimensional 
orthogonal groups of large enough {\it rank}. On the contrary, 
this result fails for all other families of finite quasisimple groups 
of Lie type with nontrivial center
(in both directions, whether we let the rank grow or we let the size of the 
definition field grow while the rank remains bounded), 
as well as for odd-dimensional odd-characteristic spin groups of 
{\it bounded} rank, cf. Theorem \ref{main4}. 

Finally, we note that while the commutator map is surjective on all finite
simple groups, this is not the case for finite quasisimple groups.
Indeed, in \cite{LBST3} the finite quasisimple groups
in which every element is a commutator are found, and the (finitely
many) exceptions are listed. 
Thus the word $x^2y^2$ behaves better on finite quasisimple
groups than the commutator word $[x,y]$ (being always surjective),
which we find rather intriguing; it is in fact the first non-primitive
word proved to be surjective on all finite quasisimple groups.
 
We are grateful to Eamonn O'Brien for computational help in proving
Theorem \ref{main5} below. We note that after this paper was completed, 
we learned
about the recent preprint \cite{BG} by Bandman and Garion, studying 
word maps of the form $x^ay^b$ on PSL$_2(q)$ and SL$_2(q)$ using the 
trace method; 
in particular they provide an independent proof of Lemma \ref{sl2} below.
\bigskip

\section{An upper bound for the width}

In this section we study the minimal $n$ such that $w(G)^n = G$,
where $w \ne 1$ is a fixed word and $G$ is a large finite
quasisimple group. This $n$ is sometimes referred to as the
{\em width} of $w$ in $G$, and we show that it is at most $3$.
In fact we prove a somewhat more general result, as follows.

\begin{thm}\label{main1}
If $w_1,w_2,w_3$ are non-trivial words, then for all finite quasisimple 
groups $G$ of sufficiently high order, $w_1(G)w_2(G)w_3(G) = G$. 
\end{thm}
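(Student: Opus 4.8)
The plan is to reduce the quasisimple case to the known simple case from \cite{LST} together with a careful analysis of the center. Write $Z=Z(G)$ and $S=G/Z$, and let $\bar w_i(S)$ denote the image of the word map on $S$. By \cite{LST}, for $|S|$ large we already have $\bar w_1(S)\bar w_2(S)\bar w_3(S)=S$. The issue is lifting this: given $g\in G$, its image $\bar g\in S$ factors as $\bar x_1\bar x_2\bar x_3$ with $\bar x_i\in\bar w_i(S)$, and picking preimages $x_i\in w_i(G)$ (these exist since word maps commute with the quotient map) we get $x_1x_2x_3=gz$ for some $z\in Z$. So the whole problem is to show that by varying the choices we can kill the central error term, i.e. that the ``defect'' coset can be made to run over all of $Z$.

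First I would handle the fibers of word maps over the center. The key point, which I expect to occupy the bulk of the argument, is that for each nontrivial $w$ and large quasisimple $G$ the set $w(G)$ is large enough and "spread out" over the center: concretely, one wants that the multiset of values $w(g_1,\dots,g_d)$, projected to $Z$ via any fixed section-type map, is not concentrated on a single coset of a proper subgroup. A clean way to get this is via a counting/character argument: show that for a fixed $s\in S$ the number of $d$-tuples with $\bar w(\vec g)=s$, weighted by which element of $Z$ the lift lands in, is roughly equidistributed over $Z$ — equivalently, that nontrivial characters of $Z$ give small exponential sums. Since $G$ is perfect and $Z$ is abelian, characters of $Z$ are not generally extendable to $G$, but one can instead exploit that $w(G)$ generates $G$ (as $G$ is quasisimple and $w\neq 1$, $w(G)\not\subseteq Z$, so $\langle w(G)\rangle=G$) to see that products of enough elements of $w(G)$ cover $Z$. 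The cleanest route: pick one value $u\in w(G)\setminus Z$; then as $g$ ranges over $G$, $g u g^{-1}$ ranges over the conjugacy class of $u$, all inside $w(G)$, and products of conjugates of $u$ generate a normal subgroup of $G$ containing a non-central element, hence all of $G$, hence all of $Z$. Thus $w(G)^N\supseteq Z$ for some bounded $N$.

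With that in hand, the lifting step goes as follows. Fix $g\in G$. Using \cite{LST} on $S$, and thickening one of the words, write $\bar g=\bar x_1\bar x_2(\bar x_3\bar z_1\cdots\bar z_N)$ where the parenthesized piece lies in $\bar w_3(S)S^{?}$... more carefully: by the simple-group result applied to the triple $(w_1,w_2,w_3)$ we get $x_1x_2x_3=gz_0$ for some $z_0\in Z$; by the previous paragraph $z_0^{-1}\in w_3(G)^N\cdot$(stuff), but I only have three factors to play with. So instead I would run \cite{LST} with $w_3$ replaced by the product word $w_3(x^{(1)})w_3(x^{(2)})\cdots w_3(x^{(N+1)})$ in disjoint variables — this is still a nontrivial word — obtaining $x_1x_2(x_3^{(1)}\cdots x_3^{(N+1)})=g z_0$ with all $x_3^{(j)}\in w_3(G)$; now vary $x_3^{(2)},\dots,x_3^{(N+1)}$ over $w_3(G)$ while keeping their product in $S$ fixed (possible because the fiber of $\bar w_3(S)^{N}$ over a fixed element of $S$, I claim, surjects onto $Z$ — this is exactly the "spread" statement above applied to $w_3$), thereby adjusting $z_0$ by an arbitrary element of $Z$ and in particular forcing it to $1$. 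This yields $g\in w_1(G)w_2(G)w_3(G)$ after re-absorbing: one must check the product word's value set is $w_3(G)^{N+1}$ and that $w_1(G)w_2(G)w_3(G)^{N+1}=w_1(G)w_2(G)w_3(G)$, which holds since $w_3(G)^{N+1}=G$... wait, that would already prove everything; the real content is that $w_3(G)^{N+1}$ times the "large" sets $w_1(G),w_2(G)$ recovers $G$ with only three nominal factors, because $w_1(G)w_2(G)$ is already "almost all" of $G$ in the sense of covering all cosets of $Z$.

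The hard part, and where I would spend the most care, is making the equidistribution-over-$Z$ step uniform in $G$: one needs a bound like "for every proper subgroup $Z_0<Z$ and every coset, the $w$-values avoid being trapped in that coset" with the escape happening after a bounded number of multiplications, bounded independently of the (unbounded) size of $Z$. For quasisimple groups of Lie type $Z$ is small and this is easy; for covers of alternating groups $Z$ has order at most $6$ so it is also easy; the genuinely delicate case is exponent-growth in sporadic-like towers, but there are none — the center of a quasisimple group has bounded order once we fix the type, and is globally bounded across all types except... actually it is globally bounded (the Schur multipliers of finite simple groups are uniformly bounded). So $N$ can be taken absolute. This reduces the whole theorem, modulo \cite{LST}, to the finite check that for each of the boundedly many isomorphism types of $Z$, and large $G$, $w(G)$ is not contained in a union of "few" cosets of a subgroup — which follows from $|w(G)|$ growing (a known consequence of Borel's theorem / the results cited) together with $w(G)$ being conjugation-invariant.
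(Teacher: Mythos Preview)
Your proposal contains a concrete error that breaks the argument: the claim that ``the Schur multipliers of finite simple groups are uniformly bounded'' is false. For $\PSL_n(q)$ the Schur multiplier has order $\gcd(n,q-1)$, so $|Z(G)|$ is unbounded over quasisimple $G$ of type $A$, and your strategy of finding a single absolute $N$ with $w_3(G)^N\supseteq Z$ collapses precisely there. Even granting a bounded $N$, the preceding steps are incomplete: from $w_1(S)w_2(S)=S$ you get $gz\in w_1(G)w_2(G)$ for \emph{some} $z\in Z$, but you need that particular $z^{-1}$ to lie in $w_3(G)$, and replacing $w_3$ by an $(N{+}1)$-fold product word only gives $w_1(G)w_2(G)w_3(G)^{N+1}=G$, which (as you yourself notice) is not the target. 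The ``equidistribution over $Z$'' you flag as the hard part is never actually established, and the closing appeal to conjugation-invariance and growth of $|w(G)|$ does not establish it either.

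The paper avoids lifting from $S$ altogether. Its short proof uses the Nikolov--Pyber product theorem: one shows $|w_i(G)|>|G|/m(G)^{1/3}$ directly, combining known lower bounds on $|w(S)|$ with the Landazuri--Seitz lower bound on $m(G)$ and the bound $|Z(G)|\le r+1$ for rank-$r$ groups of Lie type --- the point being not that $|Z|$ is absolutely bounded, but that it is small compared to $m(G)^{1/3}$. The paper's second proof does use a lifting idea resembling yours for \emph{bounded} rank (where $|Z|$ really is bounded and one has the stronger input $w_1(G)w_2(G)\supseteq G\setminus Z$ from \cite{LSh}), but handles large rank by a direct character-sum argument locating elements $g_i\in w_i(G)$ with uniformly small character values.
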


\begin{proof}

We apply the classification of finite simple groups to $G/Z(G)$.  
The sporadic cases can be ignored.
For alternating cases we prove a stronger result -- see Theorem \ref{alt} 
below. That leaves the cases when $G$ is a group of Lie type.  

We provide two proofs for this case.
The first, and shorter one, is based on the so-called Gowers'
methods (see \cite{Go}) and the Nikolov-Pyber paper \cite{NP}. 
It is shown there that if $G$ is a finite group, $m(G)$ is the
minimal degree of a non-trivial irreducible character of $G$,
and $Y_1, Y_2, Y_3$ are subsets of $G$ of size exceeding 
$|G|/m(G)^{1/3}$, then we have $Y_1 Y_2 Y_3 = G$.

Now let $G$ be a finite quasisimple group such that
$T:=G/Z(G)$ is a simple group of Lie type. Fix a word $w \ne 1$
and suppose $G$ is sufficiently large.
Lower bounds on $|w(T)|$ are given in \cite{La}, \cite{LSh} and
\cite{NP}. Suppose $T = X_r(q)$ where $r$ is the Lie rank and
$q$ the field size. By \cite{La} we have, for bounded $r$, 
$|w(T)| > a|T|$ for some $a > 0$ (depending on the bound on
$r$). For $T$ classical not of type $A$ we have $|w(T)| \ge cr^{-1}|T|$
for some absolute constant $c>0$ by \cite{LSh}. And for $T$ of type
$A$ and $r$ large enough it follows from bounds in \cite{NP} that
$|w(T)| > q^{-br}|T|$ for any fixed constant $b > 1/4$.
In what follows we use this bound with  
fixed $b$ satisfying $1/4 < b < 1/3$.

We clearly have $m(G) \ge p(T)$,
where $p(T)$ is the minimal dimension of a non-trivial projective
representation of $T$, and lower bounds on $p(T)$ 
are given in \cite{LSe}. In particular it is known that
$p(T) \ge dq^r$ for some absolute constant $d>0$.

We also have $|Z(G)| \le |S(T)|$, where $S(T)$ is
the Schur multiplier of $T$. The values of $|S(T)|$ are well known,
and we have $|S(T)| \le r+1$ with finitely many exceptions.
If $r$ is large enough then $(r+1)/(dq^r)^{1/3}$ is smaller
than both $q^{-br}$ and $cr^{-1}$; and for bounded $r$ and large $q$
we have $(r+1)/(dq^r)^{1/3} < a$. 

Putting everything together we conclude that, for fixed $w \ne 1$ and 
large enough $G$ we have 
\[
|w(G)| \ge |w(T)| > |T||S(T)|/p(T)^{1/3} \ge |G|/m(G)^{1/3}.
\]
Thus, given $w_1, w_2, w_3 \ne 1$ we obtain lower bounds on
$|w_i(G)|$ as above, and these imply, using \cite{NP},
that $w_1(G)w_2(G)w_3(G) = G$ if $G$ is sufficiently large. 
This completes the first proof of Theorem 1.
\medskip 

We now provide the second proof, which does not rely on \cite{NP},
and proves a bit more, namely: {\em there exist conjugacy classes 
$C_1, C_2, C_3$ of $G$ such that $C_i \subset w_i(G)$ and
$C_1C_2C_3 = G$.} This result cannot be proved by Gowers' method
since conjugacy classes $C_i \subset G$ are not large enough.

It suffices to treat the case that $G$ is centrally closed, so this is what we do.  If $G$ is of Suzuki or Ree type, it is simple, so the result is covered by the main theorem of \cite{LST}.  We may therefore assume that $G = \ulG(\F_q)$ where
$\ulG$ is a simple, simply connected algebraic group over the field $\F_q$.

For any fixed $r_0$, it suffices to prove the theorem when the rank $r$ 
of $\ulG$ is greater than $r_0$.
Indeed, by the proof of \cite[Theorem 1.7]{LSh}, for fixed rank and $q$ sufficiently large, every non-central element of $G$ lies in $w_1(G) w_2(G)$.  For fixed rank, the order of $Z(G)$ is bounded independent of $q$, while $|w_3(G)|$ goes to infinity with $q$, for instance by \cite{La}. Hence for $z \in Z(G)$
we see that $w_1(G)w_2(G) \cap z w_3(G)^{-1} \ne \emptyset$ for bounded $r$
and large $q$, yielding $z \in w_1(G)w_2(G)w_3(G)$ as required.

Thus, we have reduced to the case where $\ulG$ is a simply connected group of 
type $A$, $B$, $C$, or $D$, of sufficiently high rank.  For cases 
$B$, $C$, and $D$, we use \cite[Theorem 1.13]{LSh} to prove that
$w_i(G)| > c r^{-1} |G|$ for some non-zero constant $c$ which depends on $w_i$ 
but not on $G$.  By Schur's method, it suffices to prove that there exist 
elements $g_i\in w_i(G)$ such that
$$\sum_{\chi\in \Irr(G)} \frac{\chi(g_1)\chi(g_2)\chi(g_3)\chi(h)}{\chi(1)^2} \neq 0$$
for all $h\in G$.
Indeed, this implies that $C_1C_2C_3 = G$, where $C_i$ is the conjugacy
class of $g_i$ in $G$ ($i=1,2,3$).

By the triangle inequality, it suffices to prove
$$\sum_{\chi\neq 1_G} \frac{|\chi(g_1)\chi(g_2)\chi(g_3)|}{\chi(1)} < 1.$$
If there exist $g_i\in w_i(G)$ such that $|\chi(g_i)| \le \chi(1)^{1/6}$ for 
all $\chi \in \Irr(G)$, then the sum above is at most 
$\sum_{\chi \ne 1_G} \chi(1)^{-1/2}$, which tends
to $0$ as $|G| \rightarrow \infty$ by \cite[Theorem 1.2]{LiSh}. 
Hence the theorem follows in this case.

Consider the set 
$$B_G := \bigcup_{\chi\in \Irr(G)}  \{g\in G:\, |\chi(g)| > \chi(1)^{1/6}\}.$$
Since $\sum_{g \in G}|\chi(g)|^2 = |G|$ for each character $\chi \in \Irr(G)$,
we have 
$$|B_G| \le \sum_{\chi\neq 1_G} |\{g\in G:\, |\chi(g)| > \chi(1)^{1/6}\}| 
  \le \sum_{\chi\neq 1_G}\frac{|G|}{\chi(1)^{1/3}}.$$
If $m(G)$ denotes the minimal degree of a non-trivial irreducible 
representation of $G$ as before, then
$$\sum_{\chi\neq 1_G} \chi(1)^{-1/3} 
  \le \sum_{\chi\neq 1_G} \chi(1)^{-1/6} \chi(1)^{-1/6}
  \le m(G)^{-1/6} \sum_{\chi\neq 1_G} \chi(1)^{-1/6}.$$
By \cite[Theorem 1.2]{LiSh} again, the sum on the right tends to $0$ for $r$ 
sufficiently large, and this implies 
$$|B_G| \le \frac{|G|}{m(G)^{1/6}}.$$
By \cite{LSe}, we have $cr^{-1} > m(G)^{-1/6}$ for large $r$.
Now, if $\ulG$ is not of type $A$ this implies
$|w_i(G)| \ge cr^{-1}|G| > |B_G|$ when $r$ is sufficiently large, 
so there are elements $g_i \in w_i(G)$ ($i=1,2,3$) such that 
$\chi(g_i) \le \chi(1)^{1/6}$ for all $\chi \in G^*$,
and this proves the theorem, except in case $A$.

For $G = \SL_{r+1}$ and $G = \SU_{r+1}$,  \cite[Proposition 6.2.2]{LST} 
and \cite[Proposition 6.2.3]{LST} respectively show that $w(G)$ contains a 
regular semi\-simple element $g\in G$ whose associated permutation is either 
a single $r$-cycle or a product of two disjoint cycles of different length and 
of total length $r$.  In either case, the centralizer of this permutation in 
$\SSS_r$ is abelian and of order $O(r^2)$, so by \cite[Corollary 6]{GLL}, 
$|\chi(g)| = O(r^3)$
for all $\chi\in \Irr(G)$.  It follows that for $r$ sufficiently large, 
$|\chi(g)| \le \chi(1)^{1/6}$
for all irreducible characters of $G$.  
We then proceed as for the $B$, $C$, and $D$ cases.
\end{proof}

\bigskip

\section{Width 2 results for some families of quasisimple groups}

In this section we improve the bound on the width for various
families of quasisimple groups, reducing it from 3 to 2.

The following theorem treats the alternating group side of the question.
Apart from its intrinsic interest it also provides a useful tool in
proving width 2 for various covers of simple groups of Lie type.

\begin{thm}\label{alt}
Let $w_1$ and $w_2$ be non-trivial words.  For all $n$ sufficiently large, 
if $G$ is a quasisimple group with $G/Z(G) \cong \AAA_n$, 
then $w_1(G)w_2(G) = G$.
\end{thm}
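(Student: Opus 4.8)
The plan is to follow the character-theoretic (Schur/Frobenius) strategy used in the second proof of Theorem \ref{main1}, but to exploit the much better character bounds available for symmetric and alternating groups and their covers. The quasisimple groups $G$ with $G/Z(G) \cong \AAA_n$ are, for $n \geq 8$, $n \neq 6,7$, either $\AAA_n$ itself or one of the double covers $2.\AAA_n$ (or the triple/six-fold covers when $n = 6,7$, which are finite in number and may be absorbed into the ``$n$ sufficiently large'' hypothesis). So after discarding small $n$ I would reduce to the case $G = \AAA_n$, already handled by \cite{LST}, and the case $G = 2.\AAA_n$, where the new work lies. For the double cover, first I would seek elements $g_i \in w_i(G)$ whose images $\bar g_i$ in $\AAA_n$ are regular in a suitable sense — concretely, $\bar g_i$ should be a permutation with cycle type consisting of few cycles of distinct, pairwise coprime lengths (an $n$-cycle for $n$ odd, or a product of two cycles of coprime lengths summing to $n$), so that its centralizer in $\SSS_n$ is cyclic of order $O(n)$ and, by a Gow–Lübeck–Larsen–Liebeck-type bound (cf.\ \cite{GLL}), every irreducible character of $G$ — including the spin characters of $2.\AAA_n$ — satisfies $|\chi(g_i)| = n^{O(1)}$.

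The existence of such elements inside $w_i(G)$ is where I would lean on the alternating-group machinery already cited in the excerpt: the results behind \cite[Proposition 6.2.2]{LST} and the work of Larsen–Shalev on word maps on $\SSS_n$ and $\AAA_n$ show that any nontrivial word map $w_i$ hits permutations of essentially every cycle type with the right parity once $n$ is large, in particular the near-regular types above; one then lifts such an element to $2.\AAA_n$, noting that the two preimages differ by the central involution but both have the stated centralizer-order bound on their characters. The bound on the number of irreducible characters of $2.\AAA_n$ (polynomial in the partition-counting function, hence subexponential) together with a lower bound $\chi(1) \gg n^{C}$ for all $\chi \neq 1$ and all but boundedly many small $\chi$ (the degrees of both the ordinary and the spin characters of $2.\AAA_n$ grow) then makes the Frobenius sum
\[
\sum_{\chi \neq 1} \frac{|\chi(g_1)\chi(g_2)|\,|\chi(g_3')|}{\chi(1)}
\]
small — but note we only have two words. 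For the two-word version I would instead bound
\[
\sum_{\chi \neq 1_G} \frac{|\chi(g_1)|\,|\chi(g_2)|}{\chi(1)} < 1,
\]
which forces $C_1 C_2 = G$ where $C_i$ is the class of $g_i$; here the point is that with $|\chi(g_i)| = n^{O(1)}$ and $\chi(1)$ superpolynomial in $n$ for all nontrivial $\chi$ (true for $2.\AAA_n$ once $n$ is large, since even the smallest faithful spin degree is $2^{\lfloor (n-2)/2\rfloor}$ or similar, and the smallest nontrivial ordinary degree is $n-1$ — wait, that one is only polynomial).

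That last observation pinpoints the main obstacle: the minimal nontrivial degree of $\AAA_n$ is $n-1$, which is only polynomial in $n$, so the crude estimate $|\chi(g)| \leq n^{O(1)}$ applied to the $(n-1)$-dimensional (and a few other small) representations does \emph{not} make the corresponding terms of the sum tend to $0$. The fix — and the technical heart of the argument — is to handle the bounded collection of small-degree characters separately with \emph{sharp} character values rather than polynomial bounds: for a permutation $\bar g$ that is an $n$-cycle or a product of two coprime cycles, one has $\chi(\bar g) \in \{0, \pm 1\}$ for the standard character and the other low-degree characters (these values are governed by the Murnaghan–Nakayama rule and vanish unless the partition is a hook or close to it), and with the right choice of $g_1, g_2$ the products $\chi(g_1)\chi(g_2)/\chi(1)$ for these few characters sum to something well under $1$; simultaneously the choice of near-regular cycle type with distinct coprime lengths kills most spin-character values as well. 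So concretely I would: (i) reduce to $G = 2.\AAA_n$, (ii) invoke Larsen–Shalev to place near-regular elements of prescribed coprime cycle type in each $w_i(G)$ and lift them, (iii) split $\Irr(G)$ into the $O(1)$ characters of degree below a fixed polynomial bound, handled by explicit Murnaghan–Nakayama-type vanishing, and the rest, handled by $|\chi(g_i)| = n^{O(1)} \ll \chi(1)^{1/2}$ plus the subexponential count of characters, then (iv) conclude $C_1 C_2 = G$ by Frobenius. I expect step (iii), and in particular getting honest cancellation/vanishing for the spin characters at near-regular classes, to be the part requiring the most care.
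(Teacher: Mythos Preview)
Your approach is genuinely different from the paper's, and while it is plausible, it is considerably harder to execute than the route the authors take.

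The paper does \emph{not} run Frobenius on $2.\AAA_n$ at all. Instead it proves a lifting lemma (Lemma~\ref{general}): if one can find $x_i \in w_i(H_i)$ mapping to elements of $\AAA_n$ whose conjugacy-class product already covers $\AAA_n$, and if the preimage in $G$ of $\phi_1(x_1)$ is a \emph{single} $G$-conjugacy class, then $w_1(G)w_2(G)=G$ follows immediately, because any central defect can be absorbed by conjugating $\tilde y_1$. The single-class condition is arranged via Schur's criterion (an even permutation with at least two cycles of the same even length has a single-class preimage in $2.\AAA_n$), and the class-product condition in $\AAA_n$ comes from \cite{LSh2}. The existence of word values with the required cycle structure is obtained by an unexpected $p$-adic trick: for each prime $p$ the congruence filtration of $\SL_2(\Z_p)$ produces finite $p$-groups in which $w_k$ hits an element of any prescribed $p$-power order, and regular representations of such groups (for $p=2$ and $p=5$) are then glued into $\AAA_n$.

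What this buys over your plan is that \emph{no} information about spin characters of $2.\AAA_n$ is ever needed; all character theory happens downstairs in $\AAA_n$. Your route, by contrast, has to control the entire Frobenius sum over $\Irr(2.\AAA_n)$. You correctly spot the real obstacle, namely the polynomial-degree ordinary characters (the $(n-1)$-dimensional one and its relatives), and your proposed fix via Murnaghan--Nakayama is sound in principle for $n$-cycles. But you should note two further wrinkles: for odd $n$ the $n$-cycle class splits in $\AAA_n$ and the split characters pick up values of size roughly $\sqrt{n}$, not $O(1)$; and for even $n$ your two-coprime-cycle elements give nonzero Murnaghan--Nakayama values on a larger set of near-hook shapes, so the bookkeeping is messier than you suggest. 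None of this is fatal, but the paper's single-class lifting device sidesteps all of it.
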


The proof depends on two preliminary lemmas.

\begin{lem}
\label{general}
Let $H_1$, $H_2$, and $G$ be finite groups, $\phi_i\colon H_i\to G/Z(G)$
be homomorphisms, and $x_i\in w_i(H_i)$ be elements such that
$$\phi_1(x_1)^{G/Z(G)} \phi_2(x_2)^{G/Z(G)} = G/Z(G)$$
and such that the elements of $G$ lying over $\phi_1(x_1)$ lie in a single conjugacy class of $G$.  Then $w_1(G)w_2(G) = G$.
\end{lem}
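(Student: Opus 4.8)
The plan is to work entirely with the quotient map $\pi\colon G\to G/Z(G)$, using two elementary observations about word values. First, word maps commute with surjections: if $f\colon A\to B$ is onto then $f(w(A))=w(B)$, and moreover every word value in $B$ has an $f$-preimage that is itself a word value (lift the arguments arbitrarily). Second, $w(G)$ is a union of conjugacy classes of $G$, since conjugating a substitution instance of $w$ by $g\in G$ merely replaces each argument by its $g$-conjugate. Both are immediate from the definition of a word map.

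Step 1: produce a distinguished $w_1$-value lying over $\phi_1(x_1)$, and upgrade the hypothesis. Write $x_1=w_1(h_1,\dots,h_d)$ with $h_j\in H_1$, lift each $\phi_1(h_j)\in G/Z(G)$ to some $\tilde h_j\in G$, and set $y_1:=w_1(\tilde h_1,\dots,\tilde h_d)\in w_1(G)$, so that $\pi(y_1)=\phi_1(x_1)$. By the second hypothesis of the lemma, the whole fiber $\pi^{-1}(\phi_1(x_1))$ lies in the conjugacy class $C_1:=y_1^{G}$, and since $w_1(G)$ is conjugation-invariant we get $C_1\subseteq w_1(G)$. Now observe that, since $C_1$ is $G$-invariant and contains the fiber over $\phi_1(x_1)$, it contains the fiber over every $G$-conjugate of $\phi_1(x_1)$; hence $C_1=\pi^{-1}\big(\phi_1(x_1)^{G/Z(G)}\big)$ is the \emph{full} preimage of a conjugacy class of $G/Z(G)$. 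For $w_2$ only the weaker statement is needed: choose $y_2\in w_2(G)$ with $\pi(y_2)=\phi_2(x_2)$ and set $C_2:=y_2^{G}\subseteq w_2(G)$, noting $\pi(C_2)=\phi_2(x_2)^{G/Z(G)}$.

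Step 2: factor an arbitrary $g\in G$. Put $\bar g=\pi(g)$. The first hypothesis gives $\bar g=\bar a\,\bar b$ with $\bar a\in\phi_1(x_1)^{G/Z(G)}$ and $\bar b\in\phi_2(x_2)^{G/Z(G)}$. Since $\pi(C_2)=\phi_2(x_2)^{G/Z(G)}$, pick $b\in C_2\subseteq w_2(G)$ with $\pi(b)=\bar b$ and set $a:=gb^{-1}$. Then $\pi(a)=\bar g\,\bar b^{-1}=\bar a\in\phi_1(x_1)^{G/Z(G)}$, so $a\in\pi^{-1}\big(\phi_1(x_1)^{G/Z(G)}\big)=C_1\subseteq w_1(G)$. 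Thus $g=ab$ with $a\in w_1(G)$ and $b\in w_2(G)$, giving $w_1(G)w_2(G)=G$.

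There is no real obstacle here — the argument is essentially bookkeeping — but the one point that must be handled carefully is the asymmetry between the two words. For $w_2$ we are free to choose the lift $b$ of $\bar b$ inside $C_2$, whereas $a=gb^{-1}$ is then forced; so to conclude $a\in w_1(G)$ we must know that \emph{every} element of $G$ mapping into $\phi_1(x_1)^{G/Z(G)}$ is a $w_1$-value, not merely some conveniently chosen lift. That is exactly what the ``single conjugacy class'' hypothesis buys us, via the upgrade $C_1=\pi^{-1}\big(\phi_1(x_1)^{G/Z(G)}\big)$ obtained in Step 1.
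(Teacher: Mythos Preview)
Your proof is correct and follows essentially the same approach as the paper's: lift word values through $\pi$, use the product-of-conjugacy-classes hypothesis in $G/Z(G)$, and then invoke the single-conjugacy-class assumption to absorb the central defect into the $w_1$-factor. The only difference is presentational: you first establish explicitly that $C_1=\pi^{-1}\big(\phi_1(x_1)^{G/Z(G)}\big)$ and then read off $a\in C_1$, whereas the paper lifts both factors arbitrarily to get $\tilde y_1\tilde y_2=\tilde g\tilde z$ and then observes directly that $\tilde z^{-1}\tilde y_1$ is conjugate to $\tilde y_1$ (hence still a $w_1$-value).
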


\begin{proof}
Let $\tilde g\in G$ map to $g\in G/Z(G)$.  There exist elements $y_i\in w_i(G/Z(G))$
such that $y_i$ is conjugate to $x_i$ and $y_1 y_2 = g$.  Therefore there exist
elements $\tilde y_i \in w_i(G)$ mapping to $y_i$ such that $\tilde y_1 \tilde y_2 = \tilde g \tilde z$
for some element $\tilde z\in Z(G)$.  However, $\tilde z^{-1} \tilde y_1$ is conjugate to $\tilde y_1$
and therefore also lies in $w_1(G)$, so $g\in w_1(G) w_2(G)$.
\end{proof}

\begin{lem}
\label{special}
Suppose that there exist finite groups $I_k$ and $J_k$ for $k=1,\,2$
and integers $d,e\ge 5$ with the following properties:
\begin{itemize}
\item For $1\le k\le 2$, $w_k(I_k)$ contains an element $y_k$ of order $d$;
\item For $1\le k\le 2$, $w_k(J_k)$ contains an element $z_k$ of order $e$;
\item $|I_1|\cdot |I_2|$ and $|J_1|\cdot |J_2|$ are relatively prime;
\item $d$ is even.
\end{itemize}
Then for all $n$ sufficiently large, every quasisimple group $G$ with $G/Z(G)\cong \AAA_n$
satisfies $w_1(G)w_2(G) = G$.
\end{lem}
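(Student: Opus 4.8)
The strategy is to apply Lemma \ref{general} with $G$ the given quasisimple group, after building from the data $(I_k,J_k,y_k,z_k,d,e)$ suitable auxiliary groups $H_k$, homomorphisms $\phi_k\colon H_k\to G/Z(G)\cong\AAA_n$, and word values $x_k\in w_k(H_k)$. For $n$ large enough the Schur multiplier of $\AAA_n$ is cyclic of order $2$, so $G$ is either $\AAA_n$ itself or its double cover $2.\AAA_n$; in the former case the second hypothesis of Lemma \ref{general} is automatic, so it is enough to give one construction that also meets that hypothesis when $G=2.\AAA_n$. Recall that for $\sigma\in\AAA_n$ the two preimages of $\sigma$ in $2.\AAA_n$ are conjugate unless the cycle type of $\sigma$ consists of pairwise distinct odd parts; in particular it suffices that $\sigma=\phi_1(x_1)$ have a cycle of even length.

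I would build the $\phi_k$ from regular representations. Since $d$ is even and $d\mid|I_k|$, each $|I_k|$ is even; since $|I_1||I_2|$ is prime to $|J_1||J_2|$, it follows that $|I_k|$ is prime to $|J_k|$, that each $|J_k|$ is odd, and that $\gcd(d,e)=1$. Let $H_k$ be the direct product of $a_k$ copies of $I_k$ and $b_k$ copies of $J_k$, and let $\phi_k$ be the direct sum of the doubled left-regular representations of these factors (left multiplication on two disjoint copies of each factor), together with at most one extra fixed point; since in the doubled regular representation of a group $H$ an element of order $t$ acts as a product of $2|H|/t$ cycles of length $t$ and $2|H|/t$ is even, the image of $\phi_k$ lies in $\AAA_n$. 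Using $\gcd(|I_k|,|J_k|)=1$ one can, for every large $n$, choose $a_k\ge1$, $b_k\ge1$ with $2a_k|I_k|+2b_k|J_k|$ equal to $n$ or $n-1$; taking $x_k\in H_k$ equal to $y_k$ in every $I_k$-factor and to $z_k$ in every $J_k$-factor gives $x_k\in w_k(H_k)$, and $\phi_k(x_k)\in\AAA_n$ is then a permutation with at most one fixed point, all of whose nontrivial cycles have length $d$ or $e$, and which has cycles of both lengths. Moreover, by varying the admissible $(a_k,b_k)$ we may prescribe, up to a bounded additive error, what proportion of the moved points lies in $d$-cycles versus $e$-cycles. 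Since $d$ is even, $\phi_1(x_1)$ has an even cycle, so its preimages in $2.\AAA_n$ form a single conjugacy class — the second hypothesis of Lemma \ref{general}.

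The remaining point, and the \emph{main} one, is the first hypothesis: $\phi_1(x_1)^{\AAA_n}\phi_2(x_2)^{\AAA_n}=\AAA_n$. Here $\phi_1(x_1)$ and $\phi_2(x_2)$ are even permutations of $\AAA_n$ with at most one fixed point, all of whose cycle lengths lie in $\{d,e\}$ with $d,e\ge5$ and $\gcd(d,e)=1$, and whose cycle-length profiles we may steer up to lower-order error. For permutations of this ``spread-out'' shape, the statement that the product of the two $\AAA_n$-classes is all of $\AAA_n$ for $n$ large should follow from known results on products of conjugacy classes in symmetric and alternating groups; this is exactly where the coprimality of $d$ and $e$ and the bounds $d,e\ge5$ are used. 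I expect this class-multiplication step to be the principal obstacle: one must verify that the permutations produced by the regular-representation construction actually fall within the hypotheses of an applicable theorem, modifying the construction if necessary (for instance by inserting a few further copies of $I_k$ or $J_k$, or one extra block of bounded size) to make the cycle type sufficiently generic. Granting this, Lemma \ref{general} applied to $H_k$, $\phi_k$, $x_k$ yields $w_1(G)w_2(G)=G$ for every quasisimple group $G$ with $G/Z(G)\cong\AAA_n$ and $n$ large, as required.
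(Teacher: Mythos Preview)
Your overall architecture is exactly the paper's: build $\phi_k$ from regular representations of $I_k$ and $J_k$, take $x_k$ to be $(y_k,z_k)$ diagonally, use the even cycle length $d$ and Schur's criterion to get a single conjugacy class in $2.\AAA_n$, and then feed everything into Lemma~\ref{general}. The place where you part ways with the paper --- and where you yourself flag the ``principal obstacle'' --- is the class-multiplication step $\phi_1(x_1)^{\AAA_n}\phi_2(x_2)^{\AAA_n}=\AAA_n$, which you leave as a hoped-for consequence of unspecified results on products of two different classes with bounded cycle lengths.

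The paper does not rely on any such general statement. Instead it arranges the arithmetic so that $\phi_1(x_1)$ and $\phi_2(x_2)$ have \emph{identical} cycle type and \emph{no} fixed points. Concretely, one chooses non-negative integers $a_1,a_2,b_1,b_2$ with $a_1,a_2$ even and
\[
a_1|I_1|=a_2|I_2|,\qquad b_1|J_1|=b_2|J_2|,\qquad a_1|I_1|+b_1|J_1|=n,
\]
and embeds via $a_k$ copies of the regular representation of $I_k$ together with $b_k$ copies of that of $J_k$. (The evenness of $a_k$ replaces your doubling trick and lands the $I_k$-part in $\AAA_n$; the $J_k$-part is already alternating since $|J_k|$ is odd.) The coprimality of $|I_1||I_2|$ with $|J_1||J_2|$ is precisely what guarantees such $a_k,b_k$ exist for all large $n$. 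Both $\phi_k(x_k)$ then consist of exactly $a_1|I_1|/d$ cycles of length $d$ and $b_1|J_1|/e$ cycles of length $e$, so they lie in a single $\SSS_n$-class, and one can invoke \cite[Theorem~1.10(4)]{LSh2} directly. This normalisation --- forcing the two cycle types to coincide rather than merely ``steering'' them --- is the missing idea; with it your sketch becomes the paper's proof.
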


\begin{proof}
Assuming $n\ge 8$, either $G\cong \AAA_n$ or $G \cong \tilde{\AAA}_n$ 
is a double cover of $\AAA_n$.
Let $H_1 = I_1\times J_1$ and $H_2 = I_2\times J_2$.
By hypothesis, $|J_1|$ and $|J_2|$ are odd and prime to $|I_1|$ and $|I_2|$.  Therefore,
for $n$ sufficiently large, there exist non-negative integers $a_1,a_2,b_1,b_2$ such that
$a_1$ and $a_2$ are even, 
$$a_1 |I_1|  = a_2 |I_2|,\ b_1|J_1| = b_2 |J_2|,$$
and
$$a_1 |I_1| + b_1|J_1| = n.$$
We define $\phi_i\colon I_i\times J_i\to \AAA_n$ as follows.  The regular representation
embeds $I_k$ in $\SSS_{|I_k|}$ and $J_k$ in $\SSS_{|J_k|}$; the latter has image in $\AAA_{|J_k|}$.  Composing with the diagonal embeddings $\SSS_{|I_k|}\to \SSS_{|I_k|}^{a_k}$ and $\SSS_{|J_k|}\to \SSS_{|J_k|}^{b_k}$ and the embedding 
$$\SSS_{|I_k|}^{a_k} \times \SSS_{|J_k|}^{b_k} \to \SSS_{a_k |I_k| + b_k |J_k|} = \SSS_n,$$
we have image in $\AAA_n$, since $a_k$ is even.
We let $x_k = (y_k,z_k)\in H_k$ and apply Lemma~\ref{general}.
As $\phi_1(x_1)$ has at least $a_1\ge 2$ cycles of even length $d$, by 
\cite[\S9]{Sch},
the inverse image of $\phi_1(x_1)$ in $G = \tilde \AAA_n$ lies in a single conjugacy class.
As $\phi_1(x_1)$ and $\phi_2(x_2)$ each consist of $a_1|I_1|/d$ $d$-cycles and $b_1 |J_1|/e$ $e$-cycles,
if $n$ is sufficiently large, the product of their conjugacy classes in $\AAA_n$ is all of $\AAA_n$ by
\cite[Theorem 1.10 (4)]{LSh2}.
\end{proof}

We can now prove Theorem \ref{alt}.

\begin{proof}
It suffices to show that the hypotheses of Lemma~\ref{special} can always be satisfied.
For every prime $p$, let $U_p^n = \ker(\SL_2(\Z_p)\to \SL_2(\Z/p^n\Z))$.
For all $n\ge 3$, $x\in U_p^n\setminus U_p^{n+1}$ implies
$x^p \in U_p^{n+1}\setminus U_p^{n+2}$.
By the Baire category theorem, there exists an injective map from any free group to
$U_p^3$, so in particular, for any non-trivial word $w$, $w(U_p^3)$ contains some element $\alpha\in U_p^3\setminus \{1\}$.  Let $r\ge 3$ denote the largest integer for
which $\alpha\in U_p^r$.  Then the image of $\alpha$ in $U_p^3/U_p^{r+l}$
has order $p^l$.  Thus, for every $p^l$, the finite $p$-group $U_p^3/U_p^{r+l}$
has an element of order $p^l$ in the image of the word map $w$.  Applying this for $p=2$ and $p=5$, we can satisfy the hypotheses of Lemma \ref{special} with $d=8$, $e=5$.
\end{proof}

It is intriguing that the proof of Theorem \ref{alt} on covers of alternating
groups uses $p$-adic groups (as well as new character bounds for
symmetric groups via \cite{LSh2}).

\medskip
Next we prove width two for covers of odd-dimensional orthogonal groups of
large enough rank.

\begin{lem}\label{an-spin}
Let the prime $p > 2$ be coprime to $n \geq 3$ and let $q$ be a power of $p$. Then 
the double cover $H := \tilde{\AAA}_{2n}$ of the alternating group $\AAA_{2n}$ 
embeds in $G := \Spin_{2n-1}(q)$, in such a way that the central involution 
$z$ of $H$ becomes central in $G$. 
\end{lem}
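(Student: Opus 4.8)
The plan is to construct the embedding by first realizing $\tilde{\AAA}_{2n}$ inside a spin group over a finite field, exploiting the fact that $\tilde{\AAA}_{2n}$ is (a quotient of) a subgroup of a classical Weyl-type group. Concretely, the symmetric group $\SSS_{2n}$ acts on the permutation module $\F_q^{2n}$, and since $p \nmid 2n$, this module splits as the trivial submodule spanned by $(1,1,\dots,1)$ plus a nondegenerate $(2n-1)$-dimensional orthogonal complement $V$, on which $\AAA_{2n}$ acts by isometries of the induced symmetric bilinear form. This gives a map $\AAA_{2n} \to \SO(V) \cong \SO_{2n-1}(q)$ (orthogonal up to choice of discriminant type, but any nondegenerate odd-dimensional form over $\F_q$ yields a group isomorphic to $\SO_{2n-1}(q)$ since there is essentially one such group). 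The key point is then to lift this to $\Spin_{2n-1}(q)$ so that the preimage of $\AAA_{2n}$ is exactly the double cover $\tilde{\AAA}_{2n}$ rather than the split extension $\AAA_{2n} \times \Z/2$.

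First I would pin down which double cover of $\AAA_{2n}$ appears: the spinor-norm-type obstruction. The preimage of $\AAA_{2n}$ in $\Spin(V)$ is a central extension by $z$, the order-$2$ element of $\ker(\Spin(V) \to \SO(V))$; it is either $\tilde{\AAA}_{2n}$ or $\AAA_{2n}\times\langle z\rangle$, and these are distinguished by whether the extension class in $H^2(\AAA_{2n},\Z/2)$ is trivial. The cleanest way to see the class is nontrivial is to look at the images of transpositions: a transposition in $\SSS_{2n}$ acts on $V$ as a reflection, which lifts to an element of $\mathrm{Pin}(V)$ of order $4$ (its square being $z$); hence a product of two disjoint transpositions — an element of $\AAA_{2n}$ of order $2$ — lifts to an element whose square is $z \cdot z = 1$ or $z$ depending on how the two reflection-lifts anticommute. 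The standard Clifford-algebra computation shows that two reflections in orthogonal vectors lift to anticommuting elements of $\mathrm{Pin}$, so their product squares to $z$, exhibiting the nontrivial double cover; this is precisely the classical fact (Schur) that $\tilde{\AAA}_n$ is the preimage of $\AAA_n$ under $\mathrm{Pin}_{n-1} \to \SO_{n-1}$ for the deleted permutation representation. The coprimality hypothesis $p \nmid n$ (hence $p\nmid 2n$) is exactly what guarantees the permutation module is semisimple so that $V$ is nondegenerate of the right dimension.

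The main obstacle is the spinor-norm / form-type bookkeeping: one must check that the reflections actually lift to $\mathrm{Pin}$ rather than only to $\CO$ or $\GO$, i.e. that the relevant vectors have square norm (or else adjust by scalars), and one must make sure the $2n-1$-dimensional form one obtains is the one for which the problem's $\Spin_{2n-1}(q)$ is defined — but in odd dimension over a finite field this is harmless since scaling the form by a nonsquare gives an isomorphic (not merely isogenous) orthogonal group, and all odd-dimensional nondegenerate forms yield the same $\SO_{2n-1}(q)$. A secondary point is to confirm that $z$, the central involution of $\tilde{\AAA}_{2n}$, is sent to the central involution of $\Spin_{2n-1}(q)$: this is automatic because $\Spin_{2n-1}(q)$ has center exactly $\langle z\rangle$ of order $2$ (for $q$ odd), and $z$ is the unique nontrivial element of $\ker(\Spin \to \SO)$, which is what the lift of the order-$2$ element of the Schur multiplier maps to. So the only real work is the explicit Clifford-algebra verification that the cover obtained is non-split, and for that one can simply cite Schur's original computation \cite{Sch} together with the elementary observation that the spin cover of $\SO$ restricts to the (unique) non-split $\Z/2$-cover on the image of $\AAA_{2n}$.
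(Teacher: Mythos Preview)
Your approach is essentially identical to the paper's: both realize $\AAA_{2n}$ inside $\SO_{2n-1}(q)$ via the deleted permutation module over $\F_q$ (using $p\nmid 2n$ for the splitting), pull back to $\Spin(V)$, and verify non-splitness by the Clifford-algebra computation that the lift of $(12)(34)$ has order $4$. One small correction: in the natural normalization (the paper takes $Q(e_i)=1/2$) the reflection vector $u=e_1-e_2$ satisfies $Q(u)=1$, so $u^2=e$ and a single transposition lifts to an element of order $2$, not $4$; the non-splitness comes exactly as you say from the anticommutation $uv=-vu$ of orthogonal vectors, giving $(uv)^2=-u^2v^2=-e$.
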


\begin{proof}
Consider the natural action of $\SSS_{2n}$ on the 
$2n$-dimensional space $W = \langle e_i \mid 1 \leq i \leq 2n\rangle_{\F_q}$,
which fixes the non-degenerate quadratic form $Q$ defined via 
$Q(e_i) = 1/2$ and $e_i \perp e_j$ whenever $i \neq j$. As an $\SSS_{2n}$-module,
$W$ splits into the direct sum $V \oplus I$, where $I := \langle \sum_ie_i \rangle$
and $V := I^{\perp}$. Then the transposition $(1,2)$ acts on $V$ as 
the reflection $\rho_u$ where $u := e_1-e_2$, and similarly the transposition 
$(3,4)$ acts as the reflection $\rho_v$ where $v := e_3-e_4$.
As the action of $\AAA_{2n}$ on $V$ is faithful, we may identify $\AAA_{2n}$ 
with its image in $\Omega(V)$. 

Since $Q(u) = Q(v) = 1$, $u^2 = v^2 = e$, the identity element in 
the Clifford algebra $C(V)$. In fact, $u$ and $v$ both belong to 
the Clifford group $\Gamma(V)$. The conjugation action on $V$ induces a 
surjective homomorphism $\varphi~:~\Gamma(V) \to SO(V)$, with 
$$\varphi(u) = -\rho_u,~~~\varphi(v) = -\rho_v,~~~\varphi(uv) = \rho_u\rho_v = 
  (12)(34).$$
Moreover, $uv \cdot uv = -u^2v^2 = -e$ and $vu \cdot uv = e$. In particular, 
$uv$ is an element of order $4$  in 
$G := \Spin(V) \cong \Spin_{2n-1}(q)$ (see e.g. \cite[\S6]{TZ} for basic
facts about spin groups). Recall that $\varphi$ projects $G$ onto $\Omega(V)$
with kernel $\langle -e \rangle \cong C_2$. Taking $H := \varphi^{-1}(\AAA_{2n})$,
we see that $H/\langle -e \rangle \cong \AAA_{2n}$, and $H$ contains the 
inverse image $uv$ of order $4$ of $(1,2)(3,4)$. It follows that 
$H \cong \tilde{\AAA}_{2n}$.    
\end{proof}

The following statements are analogues of Lemma \ref{an-spin} for symplectic and
orthogonal groups:
 
\begin{lem}\label{an-sp}
Let $p > 2$ be any prime and let $q$ be a power of $p$. 

{\rm (i)} Suppose $n \geq 9$ is an odd square. Then 
$H := \tilde{\AAA}_{n}$ embeds in $G := \Omega^{+}_{2^{(n-3)/2}}(q)$, in such a way 
that the central involution $z$ of $H$ becomes central in $G$.

{\rm (ii)} Suppose $n \equiv 4,6 (\mod 8)$, and $p {\!\not{|}} n$. Then 
$H := \tilde{\AAA}_{n}$ embeds in $G := \Sp_{2^{(n-2)/2}}(q)$, in such a way 
that the central involution $z$ of $H$ becomes central in $G$. 
\end{lem}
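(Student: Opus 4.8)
The plan is to mimic the structure of the proof of Lemma~\ref{an-spin}, replacing the natural permutation module with a suitable faithful $\tilde{\AAA}_n$-module whose dimension matches the target classical group, and then realizing the relevant Clifford-algebra or bilinear-form structure so that the double cover $\tilde{\AAA}_n$ lands inside $\Sp$ or $\Omega^+$ with its central involution $z$ becoming $-1$. For part~(ii), the key object is the \emph{basic spin representation} of $\tilde{\AAA}_n$ (equivalently, of $\tilde{\SSS}_n$): when $n \equiv 2,3 \pmod 8$ the spin representation of $\tilde{\SSS}_n$ restricted to $\tilde{\AAA}_n$ has dimension $2^{\lfloor (n-1)/2\rfloor} = 2^{(n-2)/2}$ and is of symplectic type, carrying a nondegenerate $\tilde{\AAA}_n$-invariant alternating form on which $z$ acts as $-1$. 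So the first step is to invoke the classical construction of the spin representation via the Clifford algebra of the reflection representation $V$ of $\SSS_n$ (which we already have in hand from the proof of Lemma~\ref{an-spin}): $\tilde{\SSS}_n \hookrightarrow \Gamma(V)^\times$ acts on a spin module $S$, and one reads off the dimension and the type of the invariant form from the parity of $\dim V = n-1$ and the discriminant of $Q$. The congruence $n \equiv 4,6 \pmod 8$ is precisely what forces the restriction to $\tilde{\AAA}_n$ (rather than $\tilde{\SSS}_n$) to be irreducible of symplectic type of the stated dimension; the coprimality $p \nmid n$ guarantees $V$ is nondegenerate and the Clifford algebra is split, so everything is defined over $\F_q$.

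For part~(i), the approach is the same but now $n$ is an odd square, so $n-1 \equiv 0 \pmod 2$ and one is in the even-Clifford-algebra case where the spin module for $\tilde{\SSS}_n$ splits into two half-spin modules of dimension $2^{(n-3)/2}$; these are swapped or fixed by $\tilde{\AAA}_n$ depending on congruences, and for $n$ an odd square (so $n \equiv 1 \pmod 8$) each half-spin module is $\tilde{\AAA}_n$-invariant, self-dual of orthogonal type of $+$-sign, again with $z \mapsto -1$. Thus $\tilde{\AAA}_n \hookrightarrow \Omega^+_{2^{(n-3)/2}}(q)$, and the fact that the image lands in $\Omega^+$ rather than merely $\SO^+$ follows because $\tilde{\AAA}_n$ is perfect for $n \geq 5$ (so it maps into the derived subgroup $\Omega^+$), while $Q$ being of $+$-type over $\F_q$ is exactly the content of the square/discriminant condition. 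In both parts the requirement $z \mapsto -1 \in Z(G)$ is automatic: $z$ is the nontrivial element of the kernel of $\tilde{\SSS}_n \to \SSS_n$, it acts as a scalar on any irreducible faithful module (by Schur's lemma), and that scalar is $-1$ since $z$ has order $2$ and acts nontrivially; $-1$ is central in $\Sp$ and in $\Omega^+$.

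The main obstacle, I expect, is \emph{verifying the type of the invariant bilinear form} and, in the $\Omega^+$ case, the \emph{sign} $+$ of the orthogonal form, as a function of the residue of $n$ modulo $8$ and of $q$. These are standard but delicate parity computations in the theory of spin representations of symmetric groups (the relevant facts go back to Schur and are recorded in, e.g., the references already cited such as \cite{Sch}): one must track the Frobenius--Schur indicator of the (half-)spin module, which is governed by whether $-e$ is a square in the relevant Clifford algebra and by the behavior of the principal antiautomorphism. The congruence conditions $n \equiv 4,6 \pmod 8$ in (ii) and ``$n$ an odd square'' in (i) are chosen exactly to pin down ``symplectic'' and ``orthogonal of $+$-type'' respectively, so the proof amounts to citing or reproducing those indicator computations and then checking that, over $\F_q$ with $p \nmid n$ and $p$ odd, the abstract form is equivalent to the standard one defining $\Sp_{2m}(q)$ or $\Omega^+_{2m}(q)$. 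A secondary, more routine point is confirming the embedding is genuinely of $\tilde{\AAA}_n$ and not of $\AAA_n$: as in Lemma~\ref{an-spin}, one exhibits a specific double transposition $(12)(34)$ whose preimage $uv$ has order $4$ in the spin group, which forces the extension to be non-split.
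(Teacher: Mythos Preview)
Your core idea---use the basic spin representation of $\tilde{\AAA}_n$---is exactly what the paper does, but the route diverges at the level of implementation, and your version has a genuine gap in part~(i).

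The paper does not construct the spin module by building the Clifford algebra of the reflection module $V$ directly over $\F_q$. Instead it works in characteristic zero: for (i) it invokes a result of Gow \cite{Gow} that for $n$ an odd square the basic spin character $\alpha_n$ is afforded by an $H$-invariant \emph{unimodular} integral lattice $\Lambda$ whose reduction $\Lambda/p\Lambda$ remains irreducible; for (ii) it cites \cite{T} for the rationality ($\Q(\alpha_n)=\Q$), the irreducibility mod $p$, and the symplectic type. The determination of the sign $\eps=+$ in (i) is not done by an indicator or discriminant computation at all: since the image of $H$ contains $-1_V$ (the image of $z$) and lies in $\Omega^{\eps}$, one just quotes \cite[Proposition~2.5.13]{KL} to see that $-1_V\in\Omega^{\eps}_{2m}(p)$ forces $\eps=+$ in the relevant even-$m$ situation. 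This sidesteps entirely the ``main obstacle'' you anticipated.

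The gap in your approach is this: in (i) there is \emph{no} hypothesis $p\nmid n$ (e.g.\ $n=9$, $p=3$ is allowed). When $p\mid n$ the quadratic form on the reflection module $V\cong\F_q^{\,n-1}$ is degenerate, so your proposed Clifford-algebra construction over $\F_q$ does not produce a semisimple Clifford algebra with the standard spin module of the right dimension, and the argument breaks down. The paper's detour through a unimodular $\Z$-lattice is precisely what rescues this case: unimodularity guarantees a nondegenerate form on $\Lambda/p\Lambda$ for every prime $p$, and Gow's irreducibility result then lands $H$ inside the correct orthogonal group regardless of whether $p\mid n$. If you want to keep your ``direct over $\F_q$'' strategy, you would need a separate argument (or a reduction to the characteristic-zero lattice) to handle the degenerate case; as written, that step is missing.

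A minor point: your aside that ``$n\equiv 2,3\pmod 8$'' gives the symplectic type is not the condition in (ii); the statement (and your own later sentence) has $n\equiv 4,6\pmod 8$, which is the correct congruence for the basic spin module of $\tilde{\AAA}_n$ (not $\tilde{\SSS}_n$) to be irreducible of symplectic type.
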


\begin{proof}
It is well known that $H$ has faithful irreducible representations of 
degree $2^{\lfloor n/2 \rfloor -1}$ (the so-called {\it basic spin representations}),
one if $2|n$ and two isomorphism classes if $n$ is odd. Let $\al_n$ denote the 
character of any of these representations. In the case of (i), it was shown in
\cite{Gow} that there is an $H$-invariant unimodular lattice $\Lambda$ affording
the $H$-character $\al_n$ and moreover $V := \Lambda/p\Lambda$ is irreducible 
over $H$. The action of $H$ on $\Lambda/p\Lambda$ gives rise to 
an irreducible embedding 
$$H \hookrightarrow \Omega(V) \cong \Omega^{\eps}_{2^{(n-3)/2}}(p) 
  \leq \Omega^{\eps}_{2^{(n-3)/2}}(q),$$
and moreover the type $\eps$ is $+$ since $\Omega^{\eps}_{2^{(n-3)/2}}(p)$ contains the
central involution $-1_V$ (cf. \cite[Proposition 2.5.13]{KL}).

In the case of (ii), we also have $\Q(\al_n) = \Q$ and $\al_n (\mod p)$ is 
irreducible but of type $-$, cf. the proof of \cite[Theorem 1.2]{T}. It follows
that $H$ acts faithfully and irreducibly on a non-degenerate symplectic 
space $V = \F_q^{2^{(n-2)/2}}$.   
\end{proof}

Now we prove the following extension of \cite[Proposition 6.3.1]{LST}:

\begin{cor}\label{spin1}
Let $w_1$ and $w_2$ be non-trivial words and $k\ge 3$ an integer.
Then there exists $N$ such that for all $l > N$ and all $q$,
$$w_1(\Spin_{2kl+1}(q)) w_2(\Spin_{2kl+1}(q)) = \Spin_{2kl+1}(q).$$
\end{cor}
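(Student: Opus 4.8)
The plan is to combine Lemma~\ref{an-spin} with Lemma~\ref{general} and the alternating-group results used in the proof of Theorem~\ref{alt}. The point of Lemma~\ref{an-spin} is that whenever $p>2$ is coprime to $n\ge 3$ and $q$ is a power of $p$, the double cover $\tilde{\AAA}_{2n}$ sits inside $\Spin_{2n-1}(q)$ with the central involution of $\tilde{\AAA}_{2n}$ mapping to the central involution of $\Spin_{2n-1}(q)$. So if we write $2kl+1 = 2n-1$, i.e.\ $n = kl$, then for $p$ coprime to $kl$ we get $\tilde{\AAA}_{2kl}\hookrightarrow \Spin_{2kl+1}(q)$ compatibly with centers. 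The group $G := \Spin_{2kl+1}(q)$ has center of order $2$ (when $q$ is odd) or is simple (when $q$ is even, in which case there is nothing to prove beyond \cite{LST}); either way, it suffices to hit the non-central elements and the central involution $z$.

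First I would reduce to $q$ odd and dispose of the central involution: by Lemma~\ref{an-spin} with $H = \tilde{\AAA}_{2kl}$ embedded in $G$, the central involution $z$ of $G$ is the image of the central involution of $H$. Second, I would invoke the proof of Theorem~\ref{alt} — more precisely Lemmas~\ref{special} and~\ref{general} — applied to $H/\langle z\rangle \cong \AAA_{2kl}$: for $l$ large enough there are conjugacy classes $D_1\subset w_1(\AAA_{2kl})$ and $D_2\subset w_2(\AAA_{2kl})$, coming from permutations built out of the $p$-adic construction (with $d = 8$, $e = 5$ as in the proof of Theorem~\ref{alt}), whose product is all of $\AAA_{2kl}$, and such that the fibre over the class $D_1$ in $\tilde{\AAA}_{2kl}$ is a single conjugacy class (this is exactly where the ``$d$ even'' hypothesis of Lemma~\ref{special} and Schur's criterion \cite[\S9]{Sch} are used). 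Pushing these elements forward along $H\hookrightarrow G$ and applying Lemma~\ref{general} with $H_1 = H_2 = H$ then gives every element of $G$ — including $z$ — as a product $w_1(G)w_2(G)$, provided $2kl$ is coprime to the characteristic $p$.

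The main obstacle is the coprimality constraint: Lemma~\ref{an-spin} requires $p\nmid n = kl$, so the construction above fails precisely for those primes $p$ dividing $kl$. To handle these remaining (finitely many, for fixed $k$) characteristics, I would run a second, parallel construction using a different embedding: for $p\mid kl$ one instead embeds $\tilde{\AAA}_m$ for a suitable nearby $m$ coprime to $p$ — e.g.\ replace $2kl$ by $2kl' = 2kl - 2k$ or some bounded perturbation so that $kl'$ is coprime to $p$ while $2kl'-1$ is still of the form $2kl'+1$ with $l'>N'$ — no, more carefully: since we only need the bound ``for all $l>N$'', and the bad primes $p\mid kl$ vary with $l$, I would instead argue that for each fixed small prime $p$ there is a separate embedding of $\tilde{\AAA}_{2n}$ into $\Spin_{2n-1}(q)$ available (using that $\tilde{\AAA}_{2n}$ has a basic spin representation over $\F_p$ of the right type, as in Lemma~\ref{an-sp}, whenever $2n \equiv$ the appropriate residue and $p\nmid 2n$), and then cover the finitely many residual cases $p \mid kl$ by choosing among these alternatives one valid embedding for each $p$; since there are only finitely many such $p$ (all bounded in terms of... no — $p$ can be large).

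Let me restate the resolution of the obstacle cleanly: the real fix is that $w_1$ and $w_2$ have \emph{other} sources of elements. Rather than insisting the alternating cover embed, I would, for $p \mid kl$, fall back on the width-$3$ theorem (Theorem~\ref{main1}) or on \cite[Proposition 6.3.1]{LST} directly for those characteristics, or observe that the set of $(k,l,p)$ with $p\mid kl$ and $2kl+1$ not handled is sparse enough that a bounded-rank/large-$q$ argument à la the reduction step in the proof of Theorem~\ref{main1} applies. Concretely: for fixed characteristic $p$, $p \mid kl$ forces $l$ (given $k$) to lie in a fixed congruence class mod $p$, but $l\to\infty$ still occurs, so this does not reduce to bounded rank. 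The cleanest route, which I expect the authors take, is: enlarge the ambient group slightly — use $\Spin_{2k l+1}(q) \supset \Spin_{2m-1}(q) \times \SO_{2kl+1-(2m-1)}(q)$ for an $m \le kl$ with $m$ coprime to $p$ and $2m$ large, embed $\tilde{\AAA}_{2m}$ into the first factor via Lemma~\ref{an-spin}, act trivially (or by a word value of small support) on the complement, and run Lemma~\ref{general} there; one can always choose such an $m$ once $l$ is large because among $kl, kl-1, \dots, kl - C$ for a small constant $C$ at least one is coprime to $p$ (indeed coprime to any single prime, taking $C \ge 1$ suffices since consecutive integers are coprime — but we need $2m$ in a usable range, and $2m < 2kl+1$ with complement of even dimension, which is arranged by parity bookkeeping). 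So the genuine technical content is (a) verifying the embedding respects centers in the product setting, and (b) the parity/congruence bookkeeping to fit $\tilde{\AAA}_{2m}$ and a word-valued remainder into $\Spin_{2kl+1}(q)$; everything else is quotation of Theorem~\ref{alt} and Lemma~\ref{general}.
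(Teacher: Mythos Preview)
Your proposal has a genuine gap in the use of Lemma~\ref{general}. That lemma requires the product of the $G/Z(G)$-conjugacy classes of $\phi_1(x_1)$ and $\phi_2(x_2)$ to be all of $G/Z(G)$, and here $G/Z(G)=\Omega_{2kl+1}(q)$. What you extract from the proof of Theorem~\ref{alt} is that $D_1D_2=\AAA_{2kl}$ inside $\AAA_{2kl}$; pushing the elements along the embedding $\AAA_{2kl}\hookrightarrow\Omega_{2kl+1}(q)$ gives you no information whatsoever about the product of their $\Omega_{2kl+1}(q)$-conjugacy classes, which would be a completely different (and quite nontrivial) statement about specific permutation-type elements in a large orthogonal group. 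So the step ``applying Lemma~\ref{general} $\ldots$ then gives every element of $G$'' is unjustified. (Incidentally, $2kl+1=2n-1$ gives $n=kl+1$, not $kl$, so the coprimality constraint is $p\nmid kl+1$; this is minor but worth fixing.)

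The paper's proof avoids all of this by separating the two tasks. For the \emph{non-central} elements of $L=\Spin_{2kl+1}(q)$ it simply quotes the proof of \cite[Proposition~6.3.1]{LST}, which already shows $w_1(L)w_2(L)\supseteq L\setminus Z(L)$ for all $l>N$. Only the central involution $z$ remains (for $q$ odd), and for that one does not need Lemma~\ref{general} at all: by Theorem~\ref{alt} there is a fixed $N_1$ with $w_1(\tilde\AAA_m)w_2(\tilde\AAA_m)=\tilde\AAA_m$ for all $m\ge N_1$. Now take $N_2\in\{N_1,N_1+1\}$ coprime to $p$ (so $N_2$ is bounded by $N_1+1$ uniformly in $p$), apply Lemma~\ref{an-spin} to embed $\tilde\AAA_{2N_2}\hookrightarrow\Spin_{2N_2-1}(q)$ with matching central involutions, and then embed $\Spin_{2N_2-1}(q)\hookrightarrow L$ (via \cite[Lemma~4.1]{LBST3}), again matching centers. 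Since $z$ now lies in the image of $\tilde\AAA_{2N_2}$, the equality $w_1(\tilde\AAA_{2N_2})w_2(\tilde\AAA_{2N_2})=\tilde\AAA_{2N_2}$ immediately gives $z\in w_1(L)w_2(L)$. Note that this completely dissolves your coprimality obstacle: you do not try to embed $\tilde\AAA_{2(kl+1)}$ (which would require $p\nmid kl+1$), but a \emph{fixed small} $\tilde\AAA_{2N_2}$; for each $p$ one of $N_1$, $N_1+1$ works, and since both are at most $N_1+1$ a single bound $N\ge\max(N,N_1+1)$ suffices for all $q$.
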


\begin{proof}
Let $L := \Spin_{2kl+1}(q)$. The proof of \cite[Proposition 6.3.1]{LST} actually
shows that there exists $N$ such that $w_1(L)w_2(L)$ contains any 
non-central element of $L$, for all $l > N$ and for all $q$.
In particular, we are done if $q$ is even. Suppose $q = p^a$ and $p > 2$ is 
a prime. Since $Z(L) = \langle z \rangle \cong C_2$, we need to show
that $z \in w_1(L)w_2(L)$.  

By Theorem \ref{alt}, there is $N_1$ such that 
$w_1(\tilde{\AAA}_n)w_2(\tilde{\AAA}_n) = \tilde{\AAA}_n$ for all $n \geq N_1$.
Set $N_2 := N_1$ if $p{\!\not{|}}N_1$ and $N_2 := N_1+1$ if $p|N_1$. By Lemma 
\ref{an-spin} there is an embedding 
$H := \tilde{\AAA}_{2N_2} \hookrightarrow \Spin_{2N_2-1}(q)$ sending the central 
involution $z_1$ of $H$ to the central involution $z_2$ of 
$\Spin_{2N_2-1}(q)$.    

Replacing $N$ by $\max(N,N_2)$ we may assume $N \geq N_2$. Then there is 
an embedding $\Spin_{2N_2-1}(q) \hookrightarrow L$ sending the 
central involution $z_2$ of $\Spin_{2N_2-1}(q)$ to $z$ 
(this can be seen by applying \cite[Lemma 4.1]{LBST3}). Thus 
we can embed $H = \tilde{\AAA}_{2N_2}$ into $L$ and identify 
$z_1$ with $z$. It follows that $z \in w_1(H)w_2(H) \subseteq w_1(L)w_2(L)$, 
and so we are done.  
\end{proof}

Theorem \ref{alt} and Lemma \ref{an-sp}(i) yield the following extension of 
\cite[Proposition 6.3.2]{LST}:

\begin{cor}\label{spin2}
If $w = w_1w_2$, where $w_1$ and $w_2$ are non-trivial disjoint words,
there exists an integer $N$ such that if $2n$ is divisible by $2^N$,
for every odd prime power $q$, $w(\Spin^+_{2n}(q))$ contains 
$Z(\Spin^{+}_{2n}(q))$.
\end{cor}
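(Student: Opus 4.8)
The plan is to follow the pattern of Corollary~\ref{spin1}: reduce to the central elements of $G:=\Spin^+_{2n}(q)$ using the proof of \cite[Proposition 6.3.2]{LST}, and then hit each non-trivial central element by embedding into $G$ a quasisimple group for which the width-$2$ property is already available. Since $w_1$ and $w_2$ involve disjoint sets of variables, $w(G)=w_1(G)\,w_2(G)$, so it suffices to prove $Z(G)\subseteq w_1(G)w_2(G)$. I would choose $N$ so that $m:=2N+3$ is an odd square, $m\ge 9$, $m$ is at least as large as the threshold in Theorem~\ref{alt}, and $2^N$ is large enough for the two embedding arguments below; such $N$ form an unbounded set, so this is possible. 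The hypothesis $2^N\mid 2n$ then forces $n$ even, so $Z(G)\cong C_2\times C_2$; write $Z(G)=\{1,z_1,z_2,z_3\}$, where $z_1=\ker(G\to\SO^+_{2n}(q))$ and $z_2,z_3$ are the two preimages of $-1\in\Omega^+_{2n}(q)$. The identity is trivially a value of $w$, and, just as \cite[Proposition 6.3.1]{LST} is used in Corollary~\ref{spin1}, the proof of \cite[Proposition 6.3.2]{LST} shows (after enlarging $N$ if necessary) that every non-central element of $G$ lies in $w_1(G)w_2(G)$. So everything reduces to showing $z_1,z_2,z_3\in w_1(G)w_2(G)$.

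For $z_1$ I would use an odd-dimensional spin subgroup. Choose $l$ larger than the threshold of Corollary~\ref{spin1} with $6l+1<2n$, and write $\F_q^{2n}=U\perp U'$ with $\dim U=6l+1$ and the discriminants chosen so that $\F_q^{2n}$ is of $+$ type. The inclusion $\SO(U)\hookrightarrow\SO^+_{2n}(q)$ acting trivially on $U'$ lifts, via the functoriality of Clifford algebras, to an embedding $\Spin_{6l+1}(q)\hookrightarrow G$ carrying $\ker(\Spin(U)\to\SO(U))$ onto $z_1$. By Corollary~\ref{spin1} with $k=3$, $w_1(\Spin_{6l+1}(q))w_2(\Spin_{6l+1}(q))=\Spin_{6l+1}(q)$, which in particular contains the central involution; hence $z_1\in w_1(G)w_2(G)$.

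For $z_2$ and $z_3$, the point is that these half-spin central involutions cannot be inherited from a spin subgroup of smaller rank, so I would instead spread a copy of $\tilde{\AAA}_m$ across the whole space. By Lemma~\ref{an-sp}(i) there is an embedding $\tilde{\AAA}_m\hookrightarrow\Omega^+_{2^N}(q)$ sending the central involution of $\tilde{\AAA}_m$ to $-1$. Writing $2n=2^N t$ and decomposing $\F_q^{2n}$ as an orthogonal sum of $t$ non-degenerate $+$-type spaces each of dimension $2^N$, the diagonal embedding into the product of the corresponding groups $\Omega^+_{2^N}(q)$ gives $\tilde{\AAA}_m\hookrightarrow\Omega^+_{2n}(q)$ whose central involution maps to $-1$ on all of $\F_q^{2n}$. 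Pulling this back along $G\to\Omega^+_{2n}(q)$ produces a central extension of $\tilde{\AAA}_m$ by $\langle z_1\rangle$; since the Schur multiplier of $\tilde{\AAA}_m$ is trivial (as $m\ge 8$) and $\tilde{\AAA}_m$ is perfect, the extension splits, so $\tilde{\AAA}_m$ embeds in $G$ with its central involution mapped onto a preimage of $-1$, hence onto one of $z_2,z_3$. By Theorem~\ref{alt} this central element lies in $w_1(\tilde{\AAA}_m)w_2(\tilde{\AAA}_m)\subseteq w_1(G)w_2(G)$. Finally, the graph automorphism of $\Spin^+_{2n}(q)$ (coming from the diagram symmetry of type $D_n$) fixes $z_1$, interchanges $z_2$ and $z_3$, and preserves each $w_i(G)$, so the remaining one of $z_2,z_3$ also lies in $w_1(G)w_2(G)$. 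Combining the three cases gives $Z(G)\subseteq w_1(G)w_2(G)=w(G)$.

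The step I expect to be the main obstacle is the treatment of $z_2$ and $z_3$. Unlike $z_1$, which is simply the kernel of $\Spin\to\SO$ and is therefore visible in any spin subgroup, the half-spin central involutions are not central in a block-diagonal orthogonal subgroup — the element $-1$ of a proper non-degenerate subspace is a non-central involution of the ambient orthogonal group — so one is forced to use an embedding (here, Lemma~\ref{an-sp}(i)) whose image genuinely involves all of $\F_q^{2n}$. The care needed is in verifying that the diagonal embedding lifts correctly to $\Spin^+_{2n}(q)$ with the central involution landing in $\{z_2,z_3\}$ rather than becoming non-central, in pinning down the action of the graph automorphism on $Z(G)$ to recover the other element, and in noting that the divisibility hypothesis $2^N\mid 2n$ is exactly what makes the required orthogonal decomposition available.
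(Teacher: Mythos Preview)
Your proposal is correct and follows essentially the same route as the paper: Lemma~\ref{an-sp}(i) together with Theorem~\ref{alt} for the preimages of $-1_V$, Corollary~\ref{spin1} for $-e$, and an outer automorphism to swap the two preimages of $-1_V$. The paper's execution is slightly leaner in two places: rather than lifting $\tilde{\AAA}_m$ to $G$ via a Schur-multiplier splitting argument, it simply notes that $w(G)$ surjects onto $w(G/\langle -e\rangle)$, so some preimage of $-1_V$ already lies in $w(G)$; and your graph automorphism is realized concretely as conjugation by a non-singular vector $u\in\Gamma(V)$, with the one-line Clifford-algebra computation $utu^{-1}=-t$.
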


\begin{proof}
By Theorem \ref{alt}, there is some $N_1$ depending on $w$ such that 
$w(\tilde{\AAA}_m)$ contains the central involution of $\tilde{\AAA}_m$ for 
all $m \geq N_1$. Now choose $m \geq \max\{N_1,9\}$ to be an odd square. 
By Lemma \ref{an-sp}(i), $w(\Omega^+_{2^{(m-3)/2}}(q))$ contains the central
involution of $\Omega^+_{2^{(m-3)/2}}(q)$. Now for any $n$ divisible by 
$2^{(m-3)/2}$, by embedding $\Omega^+_{2^{(m-3)/2}}(q)$ diagonally
into $\Omega^+_{2n}(q)$, we see that $w(\Omega^+_{2n}(q))$ contains 
$-1_V$.

Let $V = \F_q^{2n}$ be the natural module for $\Omega^+_{2n}(q)$, with an 
invariant quadratic form $Q$. The conjugation action of the Clifford 
group $\Gamma(V)$ on $V$ induces a surjective homomorphism 
$\varphi~:~\Gamma(V) \to GO(V)$. Also, 
$Z(\Spin(V)) = \{e,-e,t,-t\} \cong C_2^2$, where $e$ is the identity element
in the Clifford algebra $C(V)$, and $\varphi(t) = -1_V$. Let $u \in V$ be 
any non-singular vector. Then $u \in \Gamma(V)$ and 
$$t^{-1}utu^{-1} = \varphi(t^{-1})(u) \cdot u^{-1} = -u \cdot u^{-1} = -e,$$
i.e. $utu^{-1} = -t$. Also observe that $G := \Spin(V)$ is normal 
in $\Gamma(V)$. Since $w(\Omega^+_{2n}(q))$ contains $-1_V$ and 
$\Omega^+_{2n}(q) = G/\langle -e \rangle$, we may assume that 
$t \in w(G)$. Conjugating by $u$, we also get $-t \in w(G)$. 
Certainly $e \in w(G)$.  On the other hand, the proof of Corollary 
\ref{spin1} shows that $w(\Spin_{2n-1}(q))$ contains the central involution of
$\Spin_{2n-1}(q)$ when $n$ is large enough. Embedding $\Spin_{2n-1}(q)$ 
in $G$, we get that $-e \in w(G)$.
\end{proof}

\begin{thm}\label{main6}
Let $w_1$ and $w_2$ be non-trivial words.  For all $n$ sufficiently large and for 
all $q$, if $G$ is a quasisimple group with $G/Z(G) \cong \Omega_{2n+1}(q)$, then 
$w_1(G)w_2(G) = G$.
\end{thm}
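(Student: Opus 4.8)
The plan is to classify the quasisimple groups $G$ with $G/Z(G)\cong \Omega_{2n+1}(q)$ for $n$ large and handle each. Since $Z(\Omega_{2n+1}(q))=1$, one possibility is $G=\Omega_{2n+1}(q)$ itself, which is then simple. For $n$ large the Schur multiplier of $\Omega_{2n+1}(q)$ is $C_2$ when $q$ is odd, and is trivial when $q$ is even (there $\Omega_{2n+1}(q)\cong \Sp_{2n}(q)$). Hence the only further case is $q=p^a$ with $p$ odd and $G=\Spin_{2n+1}(q)$, the universal cover, with $Z(G)=\langle z\rangle$ of order $2$.

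If $G$ is simple, the conclusion is immediate from the main theorem of \cite{LST}, since $|G|\to\infty$ as $n\to\infty$ uniformly in $q$. So assume $q=p^a$ with $p$ odd and $G=\Spin_{2n+1}(q)$; here the argument is that of Corollary~\ref{spin1} with the restriction to dimensions of the form $2kl+1$ lifted. First, by the proof of \cite[Proposition~6.3.1]{LST}---whose argument, for \emph{non-central} elements, makes no use of the special shape of the dimension---there is an $N_0$, independent of $q$, such that $w_1(G)w_2(G)$ contains every non-central element of $G$ whenever $n>N_0$. It remains to show $z\in w_1(G)w_2(G)$.

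By Theorem~\ref{alt} there is an $N_1$ (depending only on $w_1,w_2$, and we may take $N_1\ge 3$) with $w_1(\tilde{\AAA}_m)w_2(\tilde{\AAA}_m)=\tilde{\AAA}_m$ for all $m\ge N_1$; in particular the central involution of $\tilde{\AAA}_m$ lies in $w_1(\tilde{\AAA}_m)w_2(\tilde{\AAA}_m)$. Put $N_2:=N_1$ if $p\nmid N_1$ and $N_2:=N_1+1$ otherwise, so $p\nmid N_2$ and $3\le N_1\le N_2\le N_1+1$. By Lemma~\ref{an-spin} (with the role of $n$ played by $N_2$) there is an embedding $\tilde{\AAA}_{2N_2}\hookrightarrow \Spin_{2N_2-1}(q)$ taking the central involution of $\tilde{\AAA}_{2N_2}$ to that of $\Spin_{2N_2-1}(q)$. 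Now assume $n\ge\max(N_0,N_1)$; then $2N_2-1\le 2n+1$ and $(2n+1)-(2N_2-1)$ is even, so by \cite[Lemma~4.1]{LBST3} there is an embedding $\Spin_{2N_2-1}(q)\hookrightarrow G$ taking the central involution of $\Spin_{2N_2-1}(q)$ to $z$. Composing gives $\tilde{\AAA}_{2N_2}\hookrightarrow G$ with central involution $\mapsto z$, whence $z\in w_1(\tilde{\AAA}_{2N_2})w_2(\tilde{\AAA}_{2N_2})\subseteq w_1(G)w_2(G)$. Combined with the non-central case, $w_1(G)w_2(G)=G$ for all $n$ beyond a bound depending only on $w_1,w_2$ and not on $q$.

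The main obstacle is the fact invoked for $G=\Spin_{2n+1}(q)$: that $w_1(G)w_2(G)$ already contains every non-central element for \emph{every} large $n$, not merely for the arithmetic progression of dimensions covered by Corollary~\ref{spin1}. Securing this means unwinding the proof of \cite[Proposition~6.3.1]{LST}---which rests on the character estimates of \cite{LSh} and \cite{LiSh} together with an explicit construction of word values whose centralizers are small---and verifying that the hypothesis ``dimension $=2kl+1$'' is never needed for the non-central elements. Everything else is a routine assembly of Theorem~\ref{alt}, Lemma~\ref{an-spin}, \cite[Lemma~4.1]{LBST3}, and \cite{LST}, in the pattern of Corollary~\ref{spin1}.
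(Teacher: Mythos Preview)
Your handling of the central involution is essentially the paper's own argument (it is the content of Corollary~\ref{spin1}). The gap you yourself flag is real and is exactly where your argument diverges from the paper's: you assume that the proof of \cite[Proposition~6.3.1]{LST} already yields all non-central elements of $\Spin_{2n+1}(q)$ for \emph{every} large $n$, not only for dimensions of the shape $2kl+1$, but you do not verify this. The paper does not attempt to lift that dimension restriction at all; it sidesteps the issue.

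The paper's route is as follows. From the proof of \cite[Proposition~6.3.5]{LST} (not 6.3.1) one extracts a constant $B$, depending only on $w_1,w_2$, such that $w_1(G)w_2(G)$ already contains every $g\in G=\Spin_{2n+1}(q)$ with $\supp(g)>B$, for all large $n$ and all $q$; here $\supp(g)$ is the codimension of the largest eigenspace of $\varphi(g)$ on $V=\F_q^{2n+1}$. This covers all elements of large support. For $g$ with $\supp(g)\le B$ there is a primary eigenvalue $\lambda=\pm1$, and one chooses an orthogonal decomposition $V=W\oplus U$ with $\varphi(g)|_U=\lambda\cdot 1_U$ and with $\dim W$ of the special form demanded by Corollary~\ref{spin1} (namely $6N+1$ when $\lambda=1$; when $\lambda=-1$ an extra piece of type $\Spin^+_{2yM}$ coming from \cite[Proposition~6.3.2]{LST} is split off first). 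Using \cite[Lemma~4.1]{LBST3} one embeds $\Spin(W)$ into $G$ with the centers matching, so that $g$ lands inside a subgroup (or a commuting product of subgroups) of the special dimension to which Corollary~\ref{spin1} applies verbatim. Thus the restriction ``dimension $=2kl+1$'' is never removed; instead, small-support elements are pushed into a subgroup of exactly that shape.

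In short, the difference between your proposal and the paper is the treatment of non-central elements of small support: you hope \cite[Proposition~6.3.1]{LST} already covers them for arbitrary $n$, whereas the paper reduces them, via an orthogonal decomposition of $V$, to a special-dimension instance where Corollary~\ref{spin1} applies as stated.
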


\begin{proof}
1) By the main result of \cite{LST} we may assume that $q$ is odd and furthermore
$G = \Spin_{2n+1}(q)$. We proceed along the lines of the proof of 
\cite[Proposition 6.3.5]{LST}. In particular, this proof shows that there is 
some $B > 0$ depending only on $w_1$ and $w_2$ such that 
$w_1(G)w_2(G)$ contains all elements $g \in G$ with support 
$\supp(g) > B$. Recall that the {\it support} $\supp(g)$ is defined in 
\cite{LST} to be the codimension of the largest eigenspace of $\varphi(g)$ acting 
on the $\bar{\F}_q$-space $V \otimes_{\F_q} \bar{\F}_q$, where $V = \F_q^{2n+1}$ 
is the natural module for $G$, and $\varphi~:~G \to \Omega(V)$ is the 
natural projection. Also, $\ker(\varphi) = Z(G) \cong C_2$.  

It remains to show that every element $g \in G$ of support $\leq B$ lies in
$w(G)$, where we define $w(X) := w_1(X)w_2(X)$ for any group $X$. 
Without any loss we may assume $n > 2B$. By 
\cite[Proposition 4.1.2]{LST}, there is a unique eigenvalue 
$\la$ of $\varphi(g)$ such that $\codim \ker(\varphi(g)-\la) = \supp(g)$ and 
$\la = \pm 1$. 

\smallskip
2) First we consider the case $\la = 1$.
By Corollary~\ref{spin1}, there 
exists $N \geq B$ (depending on $w_1,w_2$) such that 
$w(\Spin_{6l+1}(q)) = \Spin_{6l+1}(q)$ for all $l \geq N$.
Now assume that $n \geq 4N$. 
Then it is shown in part 3a) of the proof of 
\cite[Proposition 6.3.5]{LST} that $g$ preserves
an orthogonal decomposition $V = W \oplus U$ with
$W$ of dimension $6N+1$ and $\varphi(g)|_{U} = 1_{U}$.
By \cite[Lemma 4.1]{LBST3}, $G$ contains a subgroup 
$H \cong \Spin(W) = \Spin_{6N+1}(q)$ 
such that $\varphi$ projects $H$ onto the subgroup
$$\{ f \in \Omega(V) \mid f_U = 1_U \} \cong \Omega(W)$$
with kernel $Z(G)$. Now $\varphi(g) \in \varphi(H)$ and 
$H > Z(G) = \ker(\varphi)$. It follows that 
$$g \in H = \Spin_{6N+1}(q) = w(\Spin_{6N+1}(q)) \subseteq w(G).$$

\smallskip
3) Finally, assume that $\la = -1$. By \cite[Proposition 6.3.2]{LST}, 
there exists an even integer $M$ (depending on $w_1,w_2$) such that, for any
$m \geq 1$ and any odd $q$, $w(\Spin^{+}_{2mM}(q))$ contains an element lying 
above the central involution of $\Omega^{+}_{2mM}(q)$. Fix an
integer $k \geq 3$ coprime to $2M$.
By Corollary~\ref{spin1}, there 
exists $N \geq B$ (depending on $w_1,w_2$) such that 
$w(\Spin_{2kl+1}(q)) = \Spin_{2kl+1}(q)$ for all $l \geq N$. 
Now assume that $n > k(N+M)$. By \cite[Lemma 6.3.3]{LST}, there are some integers
$x > N$ and $y > 0$ such that $n = xk+yM$. Then it is shown in part 3b) of
\cite[Proposition 6.3.5]{LST} that 
$g$ preserves an orthogonal decomposition $V = W \oplus U$, where
$\dim W = 2yM$, $W$ is of type $+$, $\varphi(g)|_W = -1_W$, and
$\dim U = 2xk+1$ with $x > N$. 

By the choice of $M$ and since $\Spin(W) \cong \Spin^{+}_{2yM}(q)$, there is 
some $h \in w(\Spin(W))$ lying above $-1_W$. Embedding $K = \Spin(W)$ into 
$G = \Spin(V)$ using \cite[Lemma 4.1]{LBST3}, we may assume that 
$\varphi(h) = \diag(-1_W,1_U)$. On the other hand,
$\varphi(g) = \diag(-1_W,u)$ for some $u \in \Omega(U)$;
in particular, $\varphi(h^{-1}g) = \diag(1_W,u)$.   
Again by \cite[Lemma 4.1]{LBST3}, $G$ contains a subgroup 
$L \cong \Spin(U) = \Spin_{2xk+1}(q)$
such that $\varphi$ projects $L$ onto the subgroup
$$\{ f \in \Omega(V) \mid f_W = 1_W \} \cong \Omega(U)$$
with kernel $Z(G)$. Now $\varphi(h^{-1}g) \in \varphi(L)$ and 
$L > Z(G) = \ker(\varphi)$. It follows that $g = ht$ for some 
$t \in L$. By the choice of $N$ and since 
$L \cong \Spin_{2xk+1}(q)$, $t \in w(L)$. Furthermore, $[K,L] = 1$
by \cite[Lemma 6.1]{TZ}. Consequently, 
$$\begin{array}{ll}g  = ht & \in w_1(K)w_2(K) \cdot w_1(L)w_2(L) \\ 
  & \subseteq w_1(K)w_1(L) \cdot w_2(K)w_2(L)\\ 
  & \subseteq w_1(G)w_2(G).\end{array}$$
\end{proof}

Note that Theorem \ref{main6} is false in the case $n$ is bounded (but $q$ grows),
cf. Theorem \ref{main4}(iv).

\begin{thm}\label{main7}
Let $w = w_1w_2$ be a product of two non-trivial disjoint words $w_1$ and $w_2$. 
Then there is some $N$ depending only on $w_1$ and $w_2$ such that if $n > N$, 
$\eps = \pm$, and moreover $\eps \neq (-1)^{n(q-1)/2}$ if $q$ is odd, then 
$w(\Spin^{\eps}_{2n}(q)) = \Spin^{\eps}_{2n}(q)$.
\end{thm}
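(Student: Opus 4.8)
The plan is to follow the same template as the proof of Theorem \ref{main6}, replacing the odd-dimensional orthogonal space $V = \F_q^{2n+1}$ by an even-dimensional space $V = \F_q^{2n}$ of type $\eps$, and inputting the symplectic analogue of our embedding lemmas where needed. First I would reduce to the spin group: by the main result of \cite{LST} we may assume $q$ is odd and $G = \Spin^{\eps}_{2n}(q)$, and then $Z(G) \cong C_2^2$ (with the hypothesis $\eps \neq (-1)^{n(q-1)/2}$ ensuring we are genuinely in the case $|Z(G)|=4$, so all four central elements must be hit). As in part 1) of the proof of Theorem \ref{main6}, the proof of \cite[Proposition 6.3.5]{LST} supplies a bound $B$ depending only on $w_1, w_2$ such that $w(G) := w_1(G)w_2(G)$ contains every $g \in G$ whose image $\varphi(g) \in \Omega^{\eps}(V)$ has support $> B$; the task is to capture the elements of support $\leq B$, where again by \cite[Proposition 4.1.2]{LST} there is a distinguished eigenvalue $\la = \pm 1$ realizing the support.

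For the small-support elements I would split into the cases $\la = 1$ and $\la = -1$ exactly as before. When $\la = 1$, $g$ preserves an orthogonal decomposition $V = W \oplus U$ with $\varphi(g)|_U = 1_U$ and $W$ of bounded dimension; using \cite[Lemma 4.1]{LBST3} we embed $\Spin(W) \cong \Spin_{\dim W}(q)$ into $G$ with $\varphi$-kernel all of $Z(G)$, so that $g$ lies in a copy of a spin group of bounded-but-large rank on which $w$ is already surjective by Corollary \ref{spin1} (choosing $\dim W$ of the form $2kl+1$ or $2kl$ as the parity of $\dim W$ requires — here one must be a little careful, since for even-dimensional $V$ the complement $W$ may be forced to be even-dimensional, in which case one uses the even-rank surjectivity that follows by the same diagonal-embedding argument as in Corollary \ref{spin1} together with Corollary \ref{spin2}). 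When $\la = -1$, one decomposes $V = W \oplus U$ with $\varphi(g)|_W = -1_W$, $\dim W = 2yM$ of type $+$, and $\dim U$ large of controlled parity; Corollary \ref{spin2} produces $h \in w(\Spin(W))$ lying above $-1_W$, and then $g = h t$ with $t$ in an embedded $\Spin(U)$ on which $w$ is surjective, commuting with $\Spin(W)$ by \cite[Lemma 6.1]{TZ}, so $g \in w_1(G)w_2(G)$ by the disjoint-support shuffle. The one genuinely new point is the treatment of the central elements themselves: since $|Z(G)| = 4$, we must show $w(G) \supseteq Z(G) = \{e, -e, t, -t\}$, and here I would argue as in Corollary \ref{spin2} — the element $e$ is trivially in $w(G)$; $-e$ comes from an embedded $\Spin_{2m-1}(q)$ via Corollary \ref{spin1}; $t$ (lying above $-1_V$) comes from Corollary \ref{spin2} applied to an embedded $\Spin^+$ of $2$-power dimension; and $-t$ is obtained by conjugating $t$ by a non-singular vector $u \in V \subset \Gamma(V)$, using $u t u^{-1} = -t$ (valid since $\varphi(t) = -1_V$, so $t^{-1}utu^{-1} = \varphi(t^{-1})(u)u^{-1} = -u u^{-1} = -e$).

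The main obstacle I expect is bookkeeping of parities and types: in the even-dimensional setting every orthogonal direct-sum decomposition must match both the dimension parity \emph{and} the type $\eps$, and one must ensure that the embedded spin subgroups of controlled rank are of a form to which Corollaries \ref{spin1} and \ref{spin2} actually apply (the hypothesis that $k \geq 3$ is coprime to $2M$ in Corollary \ref{spin1} being used exactly to make the required congruence $n = xk + yM$ solvable, cf. \cite[Lemma 6.3.3]{LST}). A secondary subtlety is verifying that, for the relevant residue $\eps$, the embeddings of \cite[Lemma 4.1]{LBST3} indeed have $\varphi$-kernel equal to the full $Z(G) \cong C_2^2$ rather than a proper subgroup — this is what makes the "lift $g$ into an embedded spin group" step legitimate, and it should follow from the same analysis of Clifford-algebra centers as in Corollary \ref{spin2}. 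Once these compatibility checks are in place, the argument is a routine adaptation of the proof of Theorem \ref{main6}.
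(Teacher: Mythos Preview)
You have misread the hypothesis. For odd $q$, the condition $\eps \neq (-1)^{n(q-1)/2}$ is precisely the condition that $Z(\Omega^{\eps}_{2n}(q)) = 1$, i.e.\ $-1_V \notin \Omega^{\eps}_{2n}(q)$ (cf.\ \cite[Proposition~2.5.13]{KL}); consequently $|Z(G)| = 2$, not $4$. Your paragraph on hitting the four central elements $\{e,-e,t,-t\}$ is therefore beside the point: there is no $t \in G$ lying over $-1_V$ here. (The situation $|Z(G)|=4$ is the \emph{complementary} case $\eps = (-1)^{n(q-1)/2}$, where the theorem is in fact false by Theorem~\ref{main4}(iii).) The genuine role of the hypothesis is to guarantee $\varphi(g) \neq -1_V$ for every $g \in G$, and it is this that drives the $\la = -1$ case.

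That case is where your plan has a real gap. You write $g = ht$ with $t$ in an embedded $\Spin(U)$ ``on which $w$ is surjective'', but both $V$ and $W$ are even-dimensional, hence so is $U$, and surjectivity of $w$ on even-dimensional spin groups is exactly the assertion being proved; Corollary~\ref{spin1} does not apply, and no induction is set up. The paper handles $U$ differently: using \cite[Propositions~6.3.6 and 6.6.6]{LST} it produces $t \in w(\Spin(U))$ lying over the \emph{non-central} element $\varphi(g)|_U$ --- and non-centrality of $\varphi(g)|_U$ is precisely where $\varphi(g) \neq -1_V$ enters, since the decomposition $V = W \oplus U$ is arranged so that $\varphi(g)|_U$ still has $-1$ as an eigenvalue without being $-1_U$. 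Only then does one set $h = gt^{-1}$, which is central in the embedded $\Spin^+_{2yM}(q)$, so that Corollary~\ref{spin2} (which covers only the center) suffices for it. (Minor point: the large-support input is \cite[Proposition~6.3.7]{LST}, the even-dimensional analogue, rather than 6.3.5.)
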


\begin{proof}
By the main result of \cite{LST} we may assume that $q$ is odd. 
We proceed along the lines of the proof of \cite[Proposition 6.3.7]{LST}. 
In particular, this proof shows that there is 
some $B > 0$ depending only on $w_1$ and $w_2$ such that 
$w(G)$ contains all elements $g \in G := \Spin^{\eps}_{2n}(q)$ with support 
$\supp(g) > B$. 

Let $\varphi$ be the natural projection $G \to \Omega^{\eps}_{2n}(q)$.
Consider any element $g \in G$ with $\supp(g) \leq B$ and let $\la = \pm 1$ be the 
primary eigenvalue of $g$. If $\la = 1$, then arguing as in part 2) 
of the proof
of Theorem \ref{main6} we obtain $g \in w(G)$. So we may assume that 
$\la = -1$. Now the condition $\eps \neq (-1)^{n(q-1)/2}$ implies that 
$n$ is odd if $\eps = +$, and furthermore $\varphi(g) \neq -1_V$ 
(as $Z(\Omega^{\eps}_{2n}(q)) = 1$ in this case by \cite[Proposition 2.5.13]{KL}).
By Corollary \ref{spin2}, there exists a $2$-power $M$
(depending on $w_1,w_2$) such that 
$w(\Spin^{+}_{2mM}(q)) \supseteq Z(\Spin^{+}_{2mM}(q))$ for any $m \geq 1$. Fix
coprime odd integers $k,l \geq 3$ and an integer $v > 0$ such
that $l|(kv-1)$ and $2|(n-v)$. Then by \cite[Proposition 6.6.6]{LST}, there
exists $L \geq B$ (depending on $w_1,w_2$) such that
$$w(\Spin^{\eps}_{2s}(q)) \supseteq 
  \Spin^{\eps}_{2s}(q) \setminus Z(\Spin^{\eps}_{2s}(q))$$
for all $s = k(2al+v)$ and $a \geq L$.

Now assume that $n > kl(2L+M)+kv$. 
As in the proof of \cite[Proposition 6.3.7]{LST}, we see that
$g$ preserves an orthogonal decomposition $V = W \oplus U$ of 
the natural $G$-module $V = \F_q^{2n}$, such that
$\dim W = 2yM$, $y \geq 1$, $W$ is of type $+$, $\varphi(g)|_{W} = -1_{W}$, 
$\dim U = k(2xl+v)$ with $x > L$, and $\varphi(g)|_U$ has at least two 
eigenvalues $-1$. Since $\varphi(g) \neq -1_V$, we conclude that 
$\varphi(g)_U$ is not scalar. Hence by \cite[Proposition 6.3.6]{LST},
there is some $t \in w(\Spin(U))$ lying above $\varphi(g)|_U$.    
Embedding $K = \Spin(U)$ into 
$G = \Spin(V)$ using \cite[Lemma 4.1]{LBST3}, we may assume that 
$\varphi(t) = \diag(1_W,\varphi(g)|_U)$ and so
$\varphi(gt^{-1}) = \diag(-1_W,1_U)$.   
Again by \cite[Lemma 4.1]{LBST3}, $G$ contains a subgroup 
$L \cong \Spin(W) = \Spin^{+}_{2yM}(q)$
such that $\varphi$ projects $L$ onto the subgroup
$$\{ f \in \Omega(V) \mid f_U = 1_U \} \cong \Omega(W)$$
with kernel $Z(G)$. Now $\varphi(gt^{-1}) \in \varphi(L)$ and 
$L > Z(G) = \ker(\varphi)$. It follows that $g = ht$ for some 
$h \in L$. By the choice of $M$ and since 
$L \cong \Spin_{2xk+1}(q)$, $h \in w(L)$. Since $[K,L] = 1$
by \cite[Lemma 6.1]{TZ}, we conclude that $g \in w(G)$.
\end{proof}

Note that Theorem \ref{main7} is false in the case
$\eps = (-1)^{n(q-1)/2}$, see Theorem \ref{main4}(iii).

\begin{thm}\label{main8}
Let $w = w_1w_2$ be a product of two non-trivial disjoint words $w_1$ and $w_2$. 
Then there is some $N$ depending only on $w_1$ and $w_2$ such that if $n$ 
is divisible by $2^N$, then 
$w(\Sp_{2n}(q)) = \Sp_{2n}(q)$.
\end{thm}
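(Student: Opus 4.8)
The plan is to adapt the strategy of Theorem~\ref{main6} (the odd-dimensional orthogonal case), replacing the role of odd-dimensional spin groups by symplectic groups and using Lemma~\ref{an-sp}(ii) in place of Lemma~\ref{an-spin}. First I would invoke the proof of the corresponding result in \cite{LST} (the analogue of \cite[Proposition 6.3.5]{LST} for $\Sp_{2n}$) to obtain a bound $B > 0$, depending only on $w_1$ and $w_2$, such that $w(G) := w_1(G)w_2(G)$ contains every element $g \in G := \Sp_{2n}(q)$ of support $\supp(g) > B$. By the main theorem of \cite{LST}, the simple group $\PSp_{2n}(q)$ is already covered, so the only new content is to show that the central involution $z$ generating $Z(G) \cong C_2$ lies in $w(G)$, and more generally that every $g$ with $\supp(g) \le B$ lies in $w(G)$; here $q$ must be odd, else $G$ is simple.

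The key step is to produce, for suitable large $n$, an embedding $\tilde{\AAA}_m \hookrightarrow \Sp_{2n}(q)$ sending the central involution of $\tilde{\AAA}_m$ to $z$, and then appeal to Theorem~\ref{alt}: for $m$ large, $w(\tilde{\AAA}_m) \ni$ the central involution, hence $z \in w(G)$. Lemma~\ref{an-sp}(ii) gives exactly such an embedding $\tilde{\AAA}_m \hookrightarrow \Sp_{2^{(m-2)/2}}(q)$ whenever $m \equiv 4, 6 \pmod 8$ and $p \nmid m$; composing with the diagonal embedding $\Sp_{2a}(q) \hookrightarrow \Sp_{2n}(q)$ (which sends central involution to central involution) handles all $n$ divisible by $2^{(m-2)/2}$. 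Choosing $m$ to be large, congruent to $4 \pmod 8$, and prime to $p$ (replace $m$ by $m + 8$ if necessary), and setting $N := (m-2)/2$, we get that $z \in w(\Sp_{2n}(q))$ whenever $2^N \mid n$. For the remaining elements $g$ with $0 < \supp(g) \le B$ and $\varphi(g) \ne 1$ (where $\varphi \colon G \to \PSp_{2n}(q)$ is the projection), I would mimic parts 2) and 3) of the proof of Theorem~\ref{main6}: decompose the natural module $V = \F_q^{2n} = W \perp U$ symplectically with $\varphi(g)|_W = 1_W$ and $\varphi(g)|_U$ concentrated on a subspace of bounded dimension, realize $\Sp(W)$ and $\Sp(U)$ as commuting subgroups of $G$ via the analogue of \cite[Lemma 4.1]{LBST3} for symplectic groups, push the "central part" of $g$ into an $\Sp(W)$ of large rank divisible by $2^N$ (so $w(\Sp(W)) = \Sp(W)$ by Corollary~\ref{spin1}'s symplectic analogue, or rather directly by the fact that a large $\Sp_{2r}(q)$ with suitable $r$ is covered), and the "bounded-support part" into $w(\Sp(U))$ using the $\supp > B$ result applied inside a larger factor.

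The main obstacle I anticipate is bookkeeping the congruence and divisibility constraints so that a single $N$ works uniformly: Lemma~\ref{an-sp}(ii) forces $m \equiv 4, 6 \pmod 8$ and $p \nmid m$, and the dimension $2^{(m-2)/2}$ of the resulting symplectic space grows exponentially in $m$, so one must check that for every $n$ divisible by $2^N$ one can actually split off a symplectic summand $W$ of the right (huge) dimension while leaving a complement $U$ whose dimension still satisfies the support-reduction geometry of \cite{LST}. This is the same kind of arithmetic juggling carried out in part 3) of Theorem~\ref{main6} (choosing $k$ coprime to $2M$, writing $n = xk + yM$, etc.), and I would expect the symplectic analogue to go through with only cosmetic changes, since symplectic spaces have no "type $\eps$" subtlety. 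A secondary point to verify is that the diagonal embedding $\Sp_{2a}(q) \hookrightarrow \Sp_{2n}(q)$ and the commuting-subgroups construction genuinely send central involution to central involution and satisfy $[\Sp(W), \Sp(U)] = 1$; these are standard (cf. \cite[Lemma 6.1]{TZ} and \cite[Lemma 4.1]{LBST3}) but should be cited explicitly.
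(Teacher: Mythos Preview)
Your handling of the central involution $z$ (Theorem~\ref{alt} plus Lemma~\ref{an-sp}(ii) plus diagonal embedding) is exactly what the paper does, and the uniformity of $N$ in $q$ that you worry about causes no trouble: adjusting $m$ by a bounded amount when $p\mid m$ (the paper adds $2$, you add $8$) shifts the required power of $2$ by a bounded amount independent of $p$.

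The rest of your plan is both unnecessary and, as sketched, circular. The symplectic input you want from \cite{LST} is not an analogue of Proposition~6.3.5 but rather \cite[Proposition~6.1.1]{LST}, whose proof already shows that $w(\Sp_{2n}(q))$ contains \emph{every} non-central element once $n$ is large enough --- not merely those of large support. So after treating $z$ there is nothing left to do, and the paper's proof is three lines. Your attempt to handle small-support non-central $g$ by placing it inside some $\Sp(W)$ with $w(\Sp(W))=\Sp(W)$ appeals to a ``symplectic analogue of Corollary~\ref{spin1}'', but that analogue \emph{is} Theorem~\ref{main8}. The underlying conceptual point is that Theorem~\ref{main8} is the symplectic counterpart of Corollary~\ref{spin1} (a width-two result for $n$ of restricted arithmetic form, proved directly from \cite{LST} plus an embedding of $\tilde\AAA_m$), not of Theorem~\ref{main6} (which removes that restriction); by Theorem~\ref{main4}(ii) the $2$-divisibility hypothesis on $n$ cannot be removed here, so the decomposition machinery of Theorem~\ref{main6} has no role to play.
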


\begin{proof}
By the results of \cite{LST}, it suffices to consider the case $q = p^f$ is odd. 
Next, the proof of \cite[Proposition 6.1.1]{LST} shows that $w(G)$ contains all
non-central elements of $G := \Sp_{2n}(q)$ if $n$ is large enough. 
Again by Theorem \ref{alt}, there is some $N_1$ depending on $w$ such that 
$w(\tilde{\AAA}_m)$ contains the central involution of $\tilde{\AAA}_m$
if $m \geq N_1$. Without loss we may assume $4|N_1$. Let $N_2 := N_1$ if
$p {\!\not{|}} N_1$ and $N_2 := N_1 +2$ if $p|N_1$.  Then 
$p$ is coprime to $N_2 \equiv 4,6 (\mod 8)$. Hence by Lemma \ref{an-sp}(ii),
$\tilde{\AAA}_{N_2}$ can be embedded irreducibly in $\Sp_{2^{(N_2-2)/2}}(q)$. 
Embedding the latter diagonally into $G = \Sp_{2n}(q)$, we see that 
$Z(G) \subseteq w(G)$ if $n$ is divisible by $2^{N_2/2-2}$.  
\end{proof}

Again, without the $2$-divisibility condition Theorem \ref{main7} is false,
cf. Theorem \ref{main4}(ii).
\bigskip

\section{Lower bounds for the width}

In this section we will prove several results to show that Theorem \ref{main1}
is best possible, namely that in general the width cannot be reduced to 2. 
By the main result of \cite{LST}, 
it is natural to expect 
the (non-trivial) central elements of finite quasisimple groups $G$ to be 
the main obstructions for words to have width $2$ on $G$.

In what follows, we will use the notation $\SL^{\eps}$ to denote $\SL$ when 
$\eps = +1$ and $\SU$ when $\eps = -1$, and similarly for $\GL^{\eps}$. 
Furthermore, $E^{\eps}_6(q)$ denotes $E_6(q)$ for $\eps = +1$ and 
${\!^2}E_6(q)$ for $\eps = -1$. Also, if
$n$ is an integer and $p$ is a prime, then $n_p$ denotes the $p$-part of $n$.

We start with the $\SL_2$-case:

\begin{lem}\label{sl2}
Let $q \geq 5$ be an odd prime power and let $2^{a+1} = (q^2-1)_2$ 
(so $a \geq 2$). Then the word $x^{2^a}y^{2^a}$ is not surjective on 
$G = \SL_2(q)$.
\end{lem}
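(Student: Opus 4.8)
The plan is to work with the power map $g \mapsto g^{2^a}$ on $G = \SL_2(q)$ and identify its image precisely enough to see that the central involution $z = -I$ is not a product of two such powers. The key point is that raising to the $2^a$-th power kills the $2^{a}$-part of element orders, and since $2^{a+1} = (q^2-1)_2$, the set $w(G) = \{g^{2^a} : g \in G\}$ consists of elements whose order divides $(q^2-1)/2^a$ (odd part of $q^2-1$, times a $2$), together with torsion coming from unipotent or smaller-order pieces. First I would run through the conjugacy classes of $\SL_2(q)$: split tori $C_{q-1}$, nonsplit tori $C_{q+1}$, the center, and the unipotent classes. For $g$ of order dividing $q-1$ or $q+1$, the order of $g^{2^a}$ is the order of $g$ divided by $\gcd$ with $2^a$; because $2^{a+1}$ is the full $2$-part of $q^2-1$, the largest $2$-power that can survive in $g^{2^a}$ is $2$. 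For $g$ unipotent or central, $g^{2^a}$ is unipotent (since $p$ is odd and $2^a$ is prime to $p$) or central; but a nontrivial unipotent raised to a power prime to $p$ stays unipotent of the same order $p$, and $z^{2^a} = z$ since $2^a$ is even. So I would catalogue $w(G)$ as: $\{1, z\}$, all unipotent elements, and all semisimple elements whose order is odd or exactly twice an odd number.

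Next I would analyze products $uv$ with $u, v \in w(G)$ and show $uv \neq z$. Suppose $uv = z = -I$, i.e. $v = -u^{-1}$. The strategy is to look at traces: in $\SL_2$, the trace determines the conjugacy class (away from the $\pm$-unipotent ambiguity), and $v = -u^{-1}$ forces $\tr(v) = -\tr(u)$ (using $\tr(u^{-1}) = \tr(u)$ for $\SL_2$-matrices). So I need: there is no element $u \in w(G)$ with $-u^{-1}$ also in $w(G)$. Writing $u$ as a semisimple element with eigenvalue $\lambda$ (in $\F_q$ or $\F_{q^2}$), $u$ has order equal to the order of $\lambda$, and $-u^{-1}$ has eigenvalue $-\lambda^{-1}$; the order of $-\lambda^{-1}$ is $\mathrm{lcm}$ of $2$ and $\mathrm{ord}(\lambda)/\gcd$ business. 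The decisive computation: if both $\mathrm{ord}(\lambda)$ and $\mathrm{ord}(-\lambda^{-1})$ have $2$-part at most $2$, derive a contradiction with $2^{a+1}$ being the exact $2$-part of $|\F_{q^2}^\times|$ (or $|\F_q^\times|$). Concretely, in a cyclic group of order $2^{a+1}m$ ($m$ odd, $a \geq 2$), I would check that for every element $\lambda$, at least one of $\lambda$ and $-\lambda^{-1}$ has $2$-part of order $\geq 4$ — this is the crux and reduces to an elementary statement about the cyclic $2$-group $C_{2^{a+1}}$: writing $\lambda = \zeta^j$ with $\zeta$ a generator, $-\lambda^{-1} = \zeta^{2^a m - j}$ (absorbing the odd part), and one checks the $2$-adic valuations of $j$ and $2^a - j$ cannot both be $\geq a-1$ when $a \geq 2$. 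One must also separately dispose of the cases where $u$ (or $v$) is unipotent or central: if $u$ is unipotent then $-u^{-1}$ is $-(\text{unipotent})$, which is not unipotent and not in the semisimple part of $w(G)$ unless... — this case is quick since $-u^{-1}$ would have order $2p$, whose $2$-part is $2$ but which is not a power in $w(G)$ as $w(G)$ contains no element of order $2p$; and if $u = \pm I$ then $v = \mp I$ is not in $w(G)$ unless it equals $z$ or $1$, giving $uv = 1 \neq z$ or we are back to $z \cdot 1$.

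The main obstacle I anticipate is the bookkeeping of which semisimple orders actually occur in $w(G)$ and correctly tracking the split versus nonsplit torus cases simultaneously — the $2$-part of $q-1$ and of $q+1$ are different (one is $2$, the other is $2^a$, in some order depending on $q \bmod 4$), so the argument "every $u$ or $-u^{-1}$ has large $2$-part" has to be run in $C_{q-1}$ and $C_{q+1}$ with the roles possibly swapped, and one must make sure a $u$ coming from the split torus and a $v$ coming from the nonsplit torus can't conspire. I would handle this by phrasing everything inside $\F_{q^2}^\times$, which contains both tori, and noting $(q^2-1)_2 = 2^{a+1}$ uniformly; then the single cyclic-$2$-group lemma above covers all cases at once. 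A final sanity check is the hypothesis $a \geq 2$: when $a = 1$ (i.e. $q \equiv 3, 5 \pmod 8$... actually $q^2 - 1 \equiv$ appropriate value) the statement genuinely fails, so the proof must visibly use $a \geq 2$, and indeed it enters exactly in the claim that valuations $v_2(j)$ and $v_2(2^a - j)$ cannot both reach $a - 1$.
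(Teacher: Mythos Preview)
Your overall strategy---catalogue $w(G)=\{g^{2^a}:g\in G\}$ and show that $z=-I$ cannot be written as $u\cdot(-u^{-1})$ with both factors in $w(G)$---is sound, but two of your intermediate claims are wrong and, as written, they derail the argument.

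First, $z^{2^a}=1$, not $z$, since $z^2=1$ and $a\ge 1$; so $z\notin w(G)$. More seriously, your description of $w(G)$ as ``elements whose order has $2$-part at most $2$'' is too generous. Every element of $\SL_2(q)$ has order dividing $q-1$, $q+1$, $p$, or $2p$; since $\{(q-1)_2,(q+1)_2\}=\{2,2^a\}$, the $2$-part of any element order in $G$ is at most $2^a$, and therefore $g^{2^a}$ always has \emph{odd} order (check the central and $\pm$unipotent cases directly). Thus $w(G)$ consists only of elements of odd order.

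This matters because your proposed cyclic-group lemma is false: if $\lambda$ has odd order $m>1$ then $-\lambda^{-1}$ has order exactly $2m$, so both have $2$-part $\le 2$, contrary to ``at least one of $\lambda,-\lambda^{-1}$ has $2$-part $\ge 4$''. Under your characterization ``$2$-part $\le 2$'', such a pair $u,-u^{-1}$ would both lie in $w(G)$ and you could not rule out $u\cdot(-u^{-1})=z$. Likewise, since you place $z$ in $w(G)$, you cannot exclude $z\cdot 1=z$.

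Once you correct $w(G)$ to ``odd order only'', the finish is immediate and far simpler than your outline: for any $u\in w(G)$ the element $-u^{-1}$ has even order ($2$ if $u=1$, and $2\,|u|$ otherwise), hence $-u^{-1}\notin w(G)$. No valuation bookkeeping or separate split/nonsplit torus analysis is needed.

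For comparison, the paper argues differently: it shows $C_G(g^{2^a})=C_G(g)$ whenever $g^{2^a}\ne 1$. Assuming $z=x^{2^a}y^{2^a}$, one has $[x^{2^a},y^{2^a}]=1$ since $z$ is central, so $x\in C_G(y^{2^a})=C_G(y)$; then $z=(xy)^{2^a}$, contradicting the observation that no $2^a$-th power equals $z$. This centralizer mechanism (forcing $x$ and $y$ to commute from commutativity of their powers) is what drives the higher-rank results later in the section; your corrected order-parity argument is quicker for $\SL_2$ but is special to the situation where all $2^a$-th powers have odd order.
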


\begin{proof}
1) First we observe that any (semisimple) $2$-element in $G$ has order dividing 
$q \pm 1$, whence its order is $2^b$ with $0 \leq b \leq a$. It follows 
that the central involution $z$ of $G$ cannot be any $2^a$-power.

2) Next we claim that $C_G(g^{2^a}) = C_G(g)$ for any $g \in G$ with 
$g^{2^a} \neq 1$. Indeed, since $C_G(g) \leq C_G(g^{2^a})$, it suffices to check 
that these two centralizers have the same order. Now if $\pm g$ is a nontrivial
unipotent element, then the two centralizers have order $2q$. Otherwise we 
may assume that $g$ is conjugate (over $\bar{\F}_q$) to 
$\diag(\al,\al^{-1})$ with $\al^{q - \eps} = 1$ for some $\eps = \pm 1$. 
By our hypothesis, $\al^{2^a} \neq 1$; in particular, $\al \neq \pm 1$ and so
$|C_G(g)| = q -\eps$. Furthermore, if $\al^{2^a} = -1$, then $g^{2^a} = z$, 
contradicting the observation in 1). Hence $\al^{2^a} \neq \pm 1$, and so
$|C_G(g^{2^a})| = q - \eps$.

3) Now assume that $z = x^{2^a}y^{2^a}$ for some $x, y \in G$. By the result of
1), $y^{2^a} \neq 1$, and so $C_G(y) = C_G(y^{2^a})$ by the result of 2). 
Since $y^{2^a} = x^{-2^a}z$, we have $[x,y^{2^a}] = 1$, whence 
$x \in C_G(y^{2^a}) = C_G(y)$. It follows that $z = (xy)^{2^a}$, contrary to 1).   
\end{proof}

\begin{thm}\label{main2}
Let $G = \SL_n^{\eps}(q)$, where $q$ is a prime power and $\eps = \pm 1$, and let
$p$ be a prime divisor of $\gcd(n,q-\eps)$. 

{\rm (i)} Let $a$ be defined as follows:
$$a = \left\{ \begin{array}{ll} 
  \lfloor \log_pn \rfloor + 1+ \log_p\left( \frac{q-\eps}{\gcd(n,q-\eps)} \right)_p,
    & p > 2 \\
  \lfloor \log_2n \rfloor + \log_2\left( \frac{q^2-1}{\gcd(n,q-\eps)} \right)_2,
    & p = 2. \end{array} \right.$$
Then $x^{p^a}y^{p^a}$ is not surjective on $G$. 

{\rm (ii)} Let $a$ be defined as follows:
$$a = \left\{ \begin{array}{ll} 
  \lfloor \log_pn \rfloor + \log_p(q-\eps)_p,
    & p > 2 \\
  \lfloor \log_2n \rfloor -1 + \log_2(q^2-1)_2,
    & p = 2 \end{array} \right.$$
and let $z \in G$ be a central element of order $p$. Then
$z \neq x^{p^a}y^{p^a}$ for all $x,y \in \GL^{\eps}_n(q)$. 

{\rm (iii)} If $n = p > 2$ and 
$(q - \eps)_p = p$, then $x^py^p$ is not surjective on $G$. 
\end{thm}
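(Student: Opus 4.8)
The plan is to reduce the statement to one structural fact that is special to the hypothesis $(q-\eps)_p = p$: the group $G := \SL^{\eps}_p(q)$ has no element of order divisible by $p^2$, that is, its Sylow $p$-subgroups have exponent $p$. Granting this, the argument is short. For any $g \in G$ write $g = g_p g_{p'}$ for the commuting product of its $p$-part and $p'$-part; since $g_p$ has order $1$ or $p$, we get $g^p = g_{p'}^p$, which is a $p'$-element. Now $p \mid q-\eps$ forces $Z(G) \cong C_p$, so there is a central $z$ of order $p$. If $z = x^p y^p$ for some $x, y \in G$, then $x^p = z (y^p)^{-1}$ commutes with $y^p$, so $z$ is a product of two commuting $p'$-elements and hence is itself a $p'$-element --- contradicting $|z| = p$. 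Thus $x^p y^p$ omits every nontrivial central element of $G$, and in particular is not surjective.

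To prove the structural fact, first note $p \nmid q$ (as $\eps = \pm 1$), so $p$ is coprime to the characteristic of $\F_q$; hence every $p$-element of $G$ is semisimple. Since a $p$-element of order $p^k$ with $k \ge 2$ has a power of order exactly $p^2$, it suffices to rule out a semisimple $g \in G$ of order $p^2$. Such a $g$ has an eigenvalue $\zeta \in \overline{\F}_q^\times$ of order $p^2$, and its multiset of $p$ eigenvalues is stable under $\mu \mapsto \mu^q$ when $\eps = +1$, and under $\mu \mapsto \mu^{-q}$ when $\eps = -1$ (Galois invariance of the characteristic polynomial, plus the unitary form condition in the minus case). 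The hypothesis $(q-\eps)_p = p$ is equivalent to $q$ (resp. $-q$) having multiplicative order exactly $p$ modulo $p^2$, so the orbit of $\zeta$ under the relevant twist already has size $p$, which is the dimension of the natural module; hence the eigenvalues of $g$ are precisely that orbit. It remains to compute $\det g$: writing $q = \eps + ps$ with $p \nmid s$ and using that $p$ is odd, a one-line congruence (or lifting the exponent) gives $1 + q + \cdots + q^{p-1} \equiv p \pmod{p^2}$ when $\eps = +1$, and likewise $1 - q + q^2 - \cdots + q^{p-1} \equiv p \pmod{p^2}$ when $\eps = -1$. Either way $\det g = \zeta^p$ is a primitive $p$-th root of unity, contradicting $g \in \SL$.

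The only real work is the structural fact, and inside it only the two congruences modulo $p^2$, which are routine; the unitary case needs nothing beyond replacing $q$ by $-q$ throughout. It is worth making explicit that the hypothesis $(q-\eps)_p = p$ is essential and cannot be weakened to $p \mid q - \eps$: if $(q-\eps)_p \ge p^2$ then there is a root of unity $\zeta$ of order $p^2$ lying in $\F_q^\times$ (when $\eps = +1$) or among the $(q+1)$-st roots of unity (when $\eps = -1$), and then $\mathrm{diag}(\zeta, \zeta^{-1}, 1, \dots, 1) \in \SL^{\eps}_p(q)$ has order $p^2$, so both the structural fact and the theorem fail --- which is exactly why case (iii) requires a separate argument from parts (i) and (ii).
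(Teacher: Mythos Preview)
Your proof of part~(iii) is correct, and the decisive computation is the same as the paper's: an eigenvalue $\zeta$ of order $p^2$ has Galois/twist orbit of size exactly $p$ (since $(q-\eps)_p=p$ forces $q\eps$ to have order $p$ modulo $p^2$), so this orbit exhausts the spectrum, and then $\det g=\zeta^{1+q\eps+\cdots+(q\eps)^{p-1}}$ has order $p$ by lifting the exponent, contradicting $g\in\SL$.

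The packaging differs. You first isolate the standalone structural fact that the Sylow $p$-subgroup of $\SL_p^{\eps}(q)$ has exponent $p$, after which the $p$-part/$p'$-part decomposition finishes the argument in two lines: every $p$th power is a $p'$-element, and two commuting $p'$-elements cannot multiply to $z$. The paper instead runs parts~(i)--(iii) in one framework: it assumes $z=x^{p^a}y^{p^a}$, uses the Jordan (semisimple/unipotent) decomposition to reduce to commuting semisimple parts $s,t$ with $s^{p^a}t^{p^a}=z$, and for~(iii) deduces that some eigenvalue $\la_1$ of $x$ has $p$-part of order $p^2$, whence the same determinant contradiction. Your route is a bit more conceptual and self-contained for~(iii) alone; the paper's has the advantage that the same setup already proves~(i) and~(ii) with no extra work. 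Your closing remark that the hypothesis $(q-\eps)_p=p$ is genuinely necessary (with the explicit diagonal counterexample when $(q-\eps)_p\ge p^2$) is a nice complement that the paper does not make explicit.
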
 

\begin{proof}
1) Write $p^b = (\gcd(n,q-\eps))_p$ so that $b \geq 1$. For (i), let $z = \om I_n$ 
be a central element of order $p^b$ of $G$, $\om \in \bar{\F}_q$ and $|\om| = p^b$.
For (ii) and (iii), let $z = \om I_n$ be a central element of order $p$ of $G$, 
$\om \in \bar{\F}_q$ and $|\om| = p$.
Assume the contrary: $z = x^{p^a}y^{p^a}$ for some $x,y \in \GL^{\eps}_n(q)$ 
in (i) or (ii), or $a=1$ and $z = x^py^p$ for some $x, y \in G$ in (iii). 
Let $x = su$ and $y = tv$ be the Jordan decompositions of $x$ and $y$, where 
$s$ and $t$ are semisimple, and $u$ and $v$ are unipotent.

\smallskip
2) Suppose $\la_1, \ldots ,\la_n$ are all the eigenvalues of the matrix $s$
and $\mu_1, \ldots ,\mu_n$ are all the eigenvalues of the matrix $t$
(with counting multiplicities). Then $\la_1^{p^a}, \ldots ,\la_n^{p^a}$ are all 
the eigenvalues of $s^{p^a}$ and $\mu_1^{p^a}, \ldots ,\mu_n^{p^a}$ are all 
the eigenvalues of $t^{p^a}$.
By our assumption, $[x^{p^a},y^{p^a}] = 1$. It follows that the semisimple 
elements $s^{p^a}$ and $t^{p^a}$ also commute, and so (over $\bar{\F}_q$) we 
can simultaneously conjugate them to diagonal matrices. So without loss 
(and relabeling the $\la_i$ and $\mu_j$ suitably) we may assume that 
$$s^{p^a} = \diag\left(\la_1^{p^a}, \ldots ,\la_n^{p^a}\right),~~~
  t^{p^a} = \diag\left(\mu_1^{p^a}, \ldots ,\mu_n^{p^a}\right).$$
In particular,
$$s^{p^a}t^{p^a} = \diag\left((\la_1\mu_1)^{p^a}, \ldots ,(\la_n\mu_n)^{p^a}
  \right).$$
Since $s^{p^a}$ and $u^{p^a}$ are powers of $x^{p^a}$, and 
$t^{p^a}$ and $v^{p^a}$ are powers of $y^{p^a}$, they all commute with each other.
But $s^{p^a}$ and $t^{p^a}$ are semisimple, so $s^{p^a}t^{p^a}$ is semisimple.
Similarly, since $u^{p^a}$ and $v^{p^a}$ are unipotent, $u^{p^a}v^{p^a}$ is 
unipotent. Furthermore, $s^{p^a}t^{p^a}$ and $u^{p^a}v^{p^a}$ commute.
Now 
$$z = (su)^{p^a}(tv)^{p^a} = s^{p^a}u^{p^a}t^{p^a}v^{p^a} = s^{p^a}t^{p^a} \cdot 
  u^{p^a}v^{p^a}.$$
It follows that $s^{p^a}t^{p^a}$ is the semisimple part of $z$, whence 
$s^{p^a}t^{p^a} = z$. In particular, $(\la_1\mu_1)^{p^a} = \om$.
Hence, if $\la$ is the $p$-part of $\la_1$ and $\mu$ is the $p$-part of
$\mu_1$, then $(\la\mu)^{p^a} = \om$ (as $\om$ is a $p$-element).

\smallskip
3) Recall that $s$ is contained in a maximal torus of $\GL^{\eps}_n(q)$ which 
is a direct product $\prod^{m}_{i=1}C_i$, where $C_i$ is a cyclic group of 
order $q^{k_i}-\eps^{k_i}$ and $\sum^m_{i=1}k_i = n$. Then there is some $k$
with $1 \leq k \leq n$ such that $|\la|$ divides $(q^k-\eps^k)_p$.
Denote $c := \lfloor \log_pn \rfloor \geq 1$, and $p^{b_1} := (q-\eps)_p$ for 
$p > 2$ and $2^{b_1+1} := (q^2-1)_2$ for $p = 2$. 
Observe that the $p$-part of $k$ is at most $p^c$. Since $p$ divides 
$q-\eps$, $(q^k-\eps^k)_p$ is at most $(q^{p^c}-\eps^{p^c})_p$, which is 
$p^{c+b_1}$ by our choice of $b_1$.      

\smallskip   
4) We have shown that $\la^{p^{c+b_1}} = 1$, and 
similarly $\mu^{p^{c+b_1}} = 1$, whence $(\la\mu)^{p^{c+b_1}} = 1$. 
Assume we are in the case of (i). Then $(\la\mu)^{p^a} = \om$ and $|\om| = p^b$ by 
the result of 2). It follows
that $c+b_1 \geq a+b$. On the other hand,
$a+b = c+b_1+1$ by our choice of $a$, a contradiction.  

Assume we are in the case of (ii). Then $(\la\mu)^{p^a} = \om$ and $|\om| = p$ by 
the result of 2). It follows
that $c+b_1 \geq a+1$. On the other hand,
$a = c+b_1$ by our choice of $a$, again a contradiction.

In the case of (iii) we have $a=b=c=b_1 = 1$. Since $\la^{p^2} = \mu^{p^2} = 1$ and
$(\la\mu)^p = \om$ has order $p$, we may assume that $|\la| = p^2$. Recall that
$\la$ is the $p$-part of $\la_1$ and $(q-\eps)_p = p$. Hence
$\la_1 \in \F_{q^{2p}} \setminus \F_{q^2}$, and furthermore
$(\la_1)^{(q\eps)^i} = (\la_1)^{(q\eps)^j}$ precisely when $p|(i-j)$. The condition 
$x \in \SL^{\eps}_p(q)$ now implies that the $p$ eigenvalues $\la_1, \ldots ,\la_p$ 
of $x$ are exactly $\la_1^{(q\eps)^i}$, $0 \leq i < p$. In this case,
$$1 = \det(x) = \la_1^{\frac{(q\eps)^p-1}{q\eps-1}},$$
and so $ \la^{\frac{(q\eps)^p-1}{q\eps-1}} = 1$, a contradiction since 
$(\frac{(q\eps)^p-1}{q\eps-1})_p = p$. 
\end{proof}

As a consequence, we can now show that width $2$ need not hold for all finite 
quasisimple groups of large enough order, and similarly Waring's problem
need not have a positive solution for all finite 
quasisimple groups of large enough order.

\begin{cor}\label{4p}
Let $n > 2$ be any integer. Then there are finite quasisimple groups $G$ of 
arbitrarily large order such that $x^ny^n$ is not surjective on $G$.  
\end{cor}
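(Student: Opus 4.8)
The plan is to derive Corollary~\ref{4p} from Theorem~\ref{main2}, by exhibiting, for each fixed $n>2$, an infinite family of finite quasisimple groups on which $x^ny^n$ fails to be surjective. First I would observe that it suffices to find a prime $p \mid n$ and then work with groups $G = \SL_m^{\eps}(q)$ (or their central quotients down to the relevant quasisimple cover) chosen so that the exponent $n$ contains enough $p$-power to force non-surjectivity. The cleanest route is to reduce to the case where $n$ is itself a prime power $p^c$: if $x^{p^c}y^{p^c}$ is non-surjective on $G$, then since $x^n = (x^{n/p^c})^{p^c}$, the image of $x^ny^n$ is contained in the image of $x^{p^c}y^{p^c}$, hence $x^ny^n$ is also non-surjective on $G$. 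So I only need to handle $n = p^c$ for a prime $p$ and $c \geq 1$ (with $p^c > 2$, so either $p$ odd or $p=2,\ c\geq 2$).

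Next I would arrange the group and field parameters so that the exponent $a$ appearing in Theorem~\ref{main2} equals $c$. Concretely, I would take $q$ in a fixed congruence class so that $(q-\eps)_p$ is a prescribed small $p$-power, choose $m$ (the rank, playing the role of $n$ in Theorem~\ref{main2}) to be a suitable power of $p$ so that $\lfloor \log_p m\rfloor$ has the right value, and ensure $p \mid \gcd(m, q-\eps)$ so that the theorem applies. The formula for $a$ in Theorem~\ref{main2}(i) is $\lfloor \log_p m\rfloor + 1 + \log_p\big((q-\eps)/\gcd(m,q-\eps)\big)_p$ for $p>2$; by taking $\gcd(m,q-\eps) = (q-\eps)_p$ (i.e.\ arranging $(q-\eps)_p \mid m$) the last term vanishes, so $a = \lfloor \log_p m\rfloor + 1$, and one picks $m = p^{c-1}$ times a $p'$-number to get $a = c$. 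For $p=2$ one uses Theorem~\ref{main2}(i) in the $p=2$ branch, or directly Lemma~\ref{sl2} when $c$ is small, matching $2^{a+1}=(q^2-1)_2$. Once $a = c = \log_p n$, Theorem~\ref{main2}(i) gives that $x^ny^n$ is not surjective on $G = \SL_m^{\eps}(q)$ for all such $q$, and letting $q \to \infty$ through the chosen congruence class (with $m$ fixed, or also letting $m$ grow) produces quasisimple groups of arbitrarily large order. One must also pass from $\SL_m^{\eps}(q)$, which may not be quasisimple for tiny $m$, to honest quasisimple groups; choosing $m$ large enough (and $m>2$) ensures $\SL_m^{\eps}(q)$ is quasisimple.

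The main obstacle I anticipate is purely bookkeeping: simultaneously pinning down $p$, $c$, the sign $\eps$, the residue of $q$, and the rank $m$ so that $\gcd(n, q-\eps)$ has a prime divisor $p$ with $(q-\eps)_p \mid m$ and the resulting $a$ from Theorem~\ref{main2} lands exactly on $\log_p n$, while keeping $G$ quasisimple. There is nothing deep here, but the $p=2$ versus $p$ odd cases use different formulas for $a$, and the edge case $n=4$, $p=2$, $c=2$ needs $a=2$ which is exactly the range covered by Lemma~\ref{sl2} (with $\SL_2(q)$, $q\equiv 3,5\pmod 8$) — so I would treat small $2$-power $n$ via Lemma~\ref{sl2} and all other $n$ via Theorem~\ref{main2}(i), or simply check that Theorem~\ref{main2}(i) already suffices uniformly once $m$ is chosen appropriately. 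I would finish by remarking that this shows Theorem~\ref{main1} is sharp: width $3$ cannot in general be improved to width $2$ on quasisimple groups, since for $n>2$ the word $x^ny^n$ already fails to be surjective on infinitely many of them.
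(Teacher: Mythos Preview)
Your reduction to prime powers and the containment argument $x^n y^n(G) \subseteq x^{p^c} y^{p^c}(G)$ are correct, and the overall strategy is the same as the paper's. However, there is a genuine gap when $n$ is itself an odd prime $p$ (i.e.\ $c=1$). In Theorem~\ref{main2}(i) the hypothesis requires $p \mid \gcd(m, q-\eps)$, so in particular $p \mid m$ and hence $m \ge p$; this forces $\lfloor \log_p m \rfloor \ge 1$ and therefore $a \ge 2$. Thus part (i) can never produce the value $a = 1$ that you need, and your recipe ``take $m = p^{c-1}$ times a $p'$-number'' collapses to a $p'$-number $m$ for $c=1$, violating the divisibility hypothesis. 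The same obstruction applies to part (ii), where $a = \lfloor \log_p m\rfloor + \log_p(q-\eps)_p \ge 1 + 1 = 2$. So neither (i) nor (ii) can ever witness non-surjectivity of $x^p y^p$.

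The paper closes exactly this gap by invoking Theorem~\ref{main2}(iii): for $n = p > 2$ and $(q-\eps)_p = p$, the word $x^p y^p$ is already non-surjective on $\SL_p(q)$; Dirichlet then supplies infinitely many primes $q \equiv p+1 \pmod{p^2}$. For $n$ a power of $2$ (necessarily $4 \mid n$) the paper, like you, falls back on Lemma~\ref{sl2} with $q \equiv 3 \pmod 8$. So your argument is essentially on the right track, but you must replace the appeal to Theorem~\ref{main2}(i) by Theorem~\ref{main2}(iii) when $p$ is odd --- after which your reduction to prime powers becomes unnecessary, since (iii) already handles every $n$ with an odd prime factor in one stroke.
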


\begin{proof}
Suppose that $n$ is divisible by an odd prime $p$. According to Dirichlet's Theorem,
there are infinitely many primes $q \equiv p+1 (\mod p^2)$. Now by Theorem
\ref{main2}(iii), $x^ny^n$ is not surjective on $\SL_p(q)$. 
In the remaining case,
$4|n$. Then again by Dirichlet's Theorem, there are infinitely many primes 
$q \equiv 3 (\mod 8)$, and for each such $q$, $x^ny^n$ is not surjective on
$\SL_2(q)$ by Lemma \ref{sl2}.
\end{proof}

The next statement shows that the obstruction for Waring's problem to have a 
positive solution is the center of the quasisimple group in question.

\begin{cor}\label{center}
Let $G$ be a finite quasisimple group and let $p$ be a prime divisor of $|Z(G)|$.
Then there is a power $p^a$ of $p$ (depending on $G$) such that 
$x^{p^a}y^{p^a}$ is not surjective on $G$.
\end{cor}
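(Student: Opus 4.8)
The plan is to pin down, for a suitably large exponent $p^a$, one specific element of $G$ that lies outside the image of the map $(x,y)\mapsto x^{p^a}y^{p^a}$; the natural candidate is a central element of order $p$, which exists by Cauchy's theorem applied to the finite abelian group $Z(G)$. Fix such an element $z\in Z(G)$ with $|z|=p$, and choose $a$ large enough that $p^a\geq |G|$ (more economically, that $p^a$ is at least the exponent of a Sylow $p$-subgroup of $G$ — in practice far smaller). The point of this choice is that then $g^{p^a}$ has order coprime to $p$ for \emph{every} $g\in G$: indeed $|g^{p^a}| = |g|/\gcd(|g|,p^a)$, and since the $p$-part $|g|_p$ of $|g|$ satisfies $|g|_p\le |g|\le p^a$ and is a power of $p$, it divides $p^a$; hence $\gcd(|g|,p^a)=|g|_p$ and $|g^{p^a}|$ is coprime to $p$.

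The only real content is the following elementary observation, valid in any finite group: a central element $z$ of order $p$ cannot be written as a product of two elements of order prime to $p$. Indeed, suppose $z=uv$ with $\gcd(|u|,p)=\gcd(|v|,p)=1$. Then $v=u^{-1}z$, and $z$, being central, commutes with $u^{-1}$; since $\gcd(|u^{-1}|,|z|)=\gcd(|u^{-1}|,p)=1$, the abelian group $\langle u^{-1},z\rangle$ is the internal direct product $\langle u^{-1}\rangle\times\langle z\rangle$, hence cyclic of order $p\cdot|u^{-1}|$ with generator $u^{-1}z=v$. So $p$ divides $|v|$, contradicting $\gcd(|v|,p)=1$.

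Combining the two steps finishes the proof: if $z=x^{p^a}y^{p^a}$ for some $x,y\in G$, then by the first step $x^{p^a}$ and $y^{p^a}$ are elements of order prime to $p$, so $z$ would be a product of two such elements, which the observation forbids; hence $z$ is not a value of $x^{p^a}y^{p^a}$ and this word map is not surjective on $G$. There is no genuine obstacle here — the argument is self-contained and uses neither the classification of finite simple groups nor anything about $G$ beyond the existence of the central element $z$. The cost is that the resulting $a$ is crude and far from optimal; extracting an (essentially) best-possible $a$, especially for the families $\SL_n^{\pm 1}(q)$, is precisely the more delicate content of Theorem~\ref{main2} and Lemma~\ref{sl2}.
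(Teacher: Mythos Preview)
Your proof is correct and considerably more elementary than the paper's. The paper proceeds by first reducing to the case $Z(G)=\langle z\rangle$ of order $p$, then choosing a prime $r\neq p$ and an absolutely irreducible $\F_q$-representation $\Phi\colon G\to\GL_n(q)$ (with $q$ a power of $r$) on which $z$ acts by a scalar of order $p$; after arranging $p\mid(q-1)$ one gets $p\mid\gcd(n,q-1)$, and Theorem~\ref{main2}(ii) then supplies an explicit exponent $a$ for which $\Phi(z)$ is not a product of two $p^a$-th powers in $\GL_n(q)$, hence neither is $z$ in $G$. Your argument bypasses all of this machinery: once $p^a$ dominates the $p$-exponent of $G$, every $p^a$-th power is a $p'$-element, and the elementary observation that a central element of order $p$ cannot be a product of two $p'$-elements finishes the job. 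Notably, your proof uses nothing about $G$ beyond the existence of a central element of order $p$, so it applies to arbitrary finite groups. The trade-off, which you already flag, is quantitative: the paper's route through Theorem~\ref{main2}(ii) produces an $a$ that is logarithmic in the representation-theoretic parameters $n,q$, whereas yours is tied to the $p$-exponent of $G$. For the bare existence statement of Corollary~\ref{center} this is immaterial.
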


\begin{proof}
Modding out by $O_{p'}(Z(G))$, we may assume that $Z(G)$ is a $p$-group. Modding out
further by a suitable subgroup of $Z(G)$, we may assume that 
$Z(G) = \langle z \rangle$ has order $p$. Let $\al$ be an irreducible faithful
character of $Z(G)$ and let $\chi \in \Irr(G)$ lie above $\al$. Fix a 
prime $r \neq p$ and let $\varphi \in \IBr_r(G)$ be an irreducible constituent of
the reduction modulo $r$ of $\chi$. Then $\varphi$ is afforded by an absolutely 
irreducible representation $\Phi~:~G \to \GL_n(q)$ for some power $q$ of $r$.
Replacing $q$ by $q^{p-1}$ if necessary, we may assume that $p|(q-1)$. Since
$G$ is perfect, $\Phi(G) \leq \SL_n(q)$. But 
$\Phi(z) = \al(z)I_n$ and $\al(z)$ has order $p$. Hence 
$1 = \det(\Phi(z)) = \al(z)^n$ implies that $p|n$. Thus $p|\gcd(n,q-1)$, and
$\Phi(z)$ is a central element of order $p$ of $\GL_n(q)$. By 
Theorem \ref{main2}(ii), there is some $a$ (depending on $n$, $q$) such that 
$\Phi(z) \neq x^{p^a}y^{p^a}$ for all $x,y \in \GL_n(q)$. It follows that 
$z \neq x^{p^a}y^{p^a}$ for all $x,y \in G$.  
\end{proof}

The results in Lemma \ref{sl2}, Theorem \ref{main2}, and Corollary \ref{4p}
assume the groups in question have bounded rank. This constraint is removed in
the next two results.  

\begin{thm}\label{main3}
Let $p$ be a prime, $n \equiv p (\mod p^2)$, $\eps = \pm 1$,
$q$ a prime power with $q \equiv p+\eps (\mod p^2)$ if $p > 2$ 
and $q \equiv 4+\eps (\mod 8)$ if $p=2$. Also let $a = 2$ if $p > 2$ and 
$a = 3$ if $p = 2$. Then $x^{p^a}y^{p^a}$ is not surjective on $G = \SL^{\eps}_n(q)$.
\end{thm}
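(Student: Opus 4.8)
The plan is to reduce Theorem~\ref{main3} to a statement about central elements in $\SL^{\eps}_p$, exactly as Theorem~\ref{main2}(iii) was proved, but now tracking the eigenvalues over a larger field because the center of $G = \SL^{\eps}_n(q)$ has order $p$ (since $n \equiv p \pmod{p^2}$ forces $\gcd(n,q-\eps)_p = p$ under our congruence on $q$) and we must kill a $p$-power slightly larger than in the bounded-rank case. First I would fix $z = \om I_n$ a central element of order $p$, assume for contradiction that $z = x^{p^a}y^{p^a}$ with $x,y \in G$, and take Jordan decompositions $x = su$, $y = tv$. The argument of part~2) of the proof of Theorem~\ref{main2} applies verbatim: $s^{p^a}$ and $t^{p^a}$ commute, can be simultaneously diagonalized over $\bar{\F}_q$, and $s^{p^a}t^{p^a}$ is the semisimple part of $z$, hence equals $z$; so if $\la$, $\mu$ denote the $p$-parts of chosen eigenvalues $\la_1$ of $s$ and $\mu_1$ of $t$ with $\la_1\mu_1 \mapsto \om$, then $(\la\mu)^{p^a} = \om$ has order $p$.

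Next I would bound the order of $\la$. As in part~3) of the proof of Theorem~\ref{main2}, $s$ lies in a maximal torus $\prod_{i=1}^m C_i$ with $|C_i| = q^{k_i} - \eps^{k_i}$ and $\sum k_i = n$, so $|\la|$ divides $(q^k - \eps^k)_p$ for some $k \le n$. The key arithmetic input is our hypothesis $q \equiv p+\eps \pmod{p^2}$ (resp.\ $q \equiv 4+\eps \pmod 8$): this gives $(q-\eps)_p = p$ (resp.\ $(q^2-1)_2 = 8$), so $(q^k - \eps^k)_p \le p \cdot k_p$ by standard lifting-the-exponent considerations, and since $k \le n$ with $\lfloor \log_p n\rfloor$ controlling $k_p$... but here the point is sharper: I want to show $|\la| \le p^{a}$ fails to be large enough unless $k$ is divisible by a high power of $p$, which $\sum k_i = n \equiv p \pmod{p^2}$ constrains. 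Concretely, with $a = 2$ ($p > 2$), if $|\la| = p^2$ then $k$ must be divisible by $p$; similarly for $\mu$; and then one runs the determinant computation as in part~4) of Theorem~\ref{main2}(iii): the eigenvalues $\la_1^{(q\eps)^i}$ for $0 \le i < p$ must all occur among the eigenvalues of the relevant block, forcing $\la^{((q\eps)^p - 1)/(q\eps - 1)} = 1$, which contradicts $(((q\eps)^p-1)/(q\eps-1))_p = p$ together with $|\la| = p^2$.

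The main obstacle I anticipate is the bookkeeping in the case $|\la| = p$ (or $|\mu| = p$) rather than $p^2$: then $(\la\mu)^{p^a}$ could a priori be trivial, contradicting $|\om| = p$, \emph{unless} exactly one of $\la,\mu$ has order $p^2$. So I would first argue that we may assume $|\la| = p^2$ (say), then deploy the congruence $n \equiv p \pmod{p^2}$ to pin down the cycle structure of the torus: the sub-block of $s$ on which $\la$ has order $p^2$ must have size divisible by $p$ but, crucially, the \emph{total} dimension $n$ being $\equiv p \pmod{p^2}$ limits how these sizes can fit together. The cleanest route is probably to observe that the constituent of $x$ contributing eigenvalues of $p$-part-order $p^2$ lies in an $\SL^{\eps}$ of a $p$-divisible dimension, reduce the determinant constraint to that sub-$\SL^{\eps}_{pk'}(q)$, and show $\det = 1$ there already fails — this is where I'd expect to spend real effort, and where the precise shape of the exponent $a$ (namely $a=2$ for odd $p$, $a=3$ for $p=2$) gets used, matching the exponents appearing in Theorem~\ref{main2}(iii) and Lemma~\ref{sl2} respectively. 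The $p=2$ case will require the extra care already visible in Lemma~\ref{sl2} (orders of $2$-elements divide $q\pm1$, and $(q^2-1)_2 = 8$), so I would handle it by the same reduction to $\SL_2(q)$-type blocks.
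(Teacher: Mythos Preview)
Your setup is correct and matches the paper: reduce to the semisimple parts, show $s^{p^a}t^{p^a}=z$, simultaneously diagonalize, and conclude $(\la_i\mu_i)^{p^a}=\om$ for \emph{every} index $i$, not just one. The arithmetic you cite (lifting-the-exponent giving $(q^k-\eps^k)_p = p\cdot k_p$ under the hypotheses) is also right.

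The gap is in the second half. The determinant argument of Theorem~\ref{main2}(iii) does not transplant to $n>p$. In that proof the conclusion $\la^{((q\eps)^p-1)/(q\eps-1)}=1$ came from the fact that the $p$ Galois conjugates of $\la_1$ were \emph{all} of the eigenvalues of $x$, so their product was $\det(x)=1$. For general $n$ this fails: the Galois orbit of $\la_1$ accounts for only some of the eigenvalues, and $\det(x)=1$ constrains only the product of \emph{all} of them. There is no ``sub-$\SL^{\eps}_{pk'}(q)$'' containing $x$ to which you can reduce --- $x$ need not be block-diagonal in any useful sense, and even when eigenspaces group nicely, the product of eigenvalues over a sub-block has no reason to equal $1$. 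So the line ``reduce the determinant constraint to that sub-$\SL^{\eps}_{pk'}(q)$, and show $\det=1$ there already fails'' does not go through.

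What the paper actually does is a counting argument that uses \emph{all} indices simultaneously and couples the $\la_i$'s to the $\mu_i$'s. For each $i$ let $k_i$ (resp.\ $\ell_i$) be the minimal $k$ with $\la_i^{q^k-\eps^k}=1$ (resp.\ $\mu_i^{q^k-\eps^k}=1$); one shows each $k_i,\ell_i$ is a $p$-power, and that either $|\la_i|=rk_i/p$ or $k_i=1$ with $|\la_i|\mid r/p^2$ (and similarly for $\mu_i$). Partition $\{1,\dots,n\}$ into four sets $X,Y,Z,T$ according to whether $p^2$ divides $k_i$ and/or $\ell_i$. The Frobenius-type map $J\colon \alpha\mapsto\alpha^{q\eps}$ permutes the eigenvalues of $s$; the key step is to check that if $\la_i=J(\la_1)$ and $1\in X$ then $i\in X$ as well (this uses the relation $(\la_i\mu_i)^r=\om$ to transfer information from $\mu_1$ to $\mu_i$), so $J$ acts on $\{V_i:i\in X\}$ with each orbit of length divisible by $p^2$. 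The same holds for $Y$ and $Z$. Hence $p^2$ divides $|X|,|Y|,|Z|$, and since $|X|+|Y|+|Z|+|T|=n\equiv p\pmod{p^2}$, the set $T$ is nonempty. But $m\in T$ forces $\la_m^r=\mu_m^r=1$, so $\om=(\la_m\mu_m)^r=1$, the desired contradiction. The congruence $n\equiv p\pmod{p^2}$ is used exactly here, not via any determinant.
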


\begin{proof}
1) Our assumptions imply that $(\gcd(n,q-\eps))_p = p$, hence 
$O_p(Z(G)) = \langle z \rangle$ has order $p$, and $z = \om I_n$ with $|\om| = p$.
Assume the contrary: $z = x^ry^r$ for some $x,y \in G$ and $r := p^a$. Consider 
the Jordan decompositions $x = su$ and $y = tv$, with $s,t$ semisimple. 
As in part 2) of the proof of Theorem \ref{main2}, we see that $z=s^rt^r$ and 
$s^r$, $t^r$ commute. Let $s = s_1s_2$ and $t = t_1t_2$, where $s_1$, resp. $t_1$,
is the $p$-part of $s$, resp. of $t$, and $s_2$, resp. $t_2$,
is the $p'$-part of $s$, resp. of $t$. Then again $s_1^r$, $s_2^r$, $t_1^r$, $t_2^r$
all commute with each other, and $s_1^rt_1^r$ is a $p$-element and $s_2^rt_2^r$ is 
a $p'$-element. Now 
$$z = (s_1s_2)^r(t_1t_2)^r = s_1^rs_2^rt_1^rt_2^r = s_1^rt_1^r \cdot 
  s_2^rt_2^r.$$
It follows that $z = s_1^rt_1^r$. Without loss we may replace $s$ by $s_1$, $t$ by 
$t_1$, and assume that $s$ and $t$ are $p$-elements with $z = s^rt^r$.

Suppose $\la_1, \ldots ,\la_n$ are all the eigenvalues of the matrix $s$
and $\mu_1, \ldots ,\mu_n$ are all the eigenvalues of the matrix $t$
(with counting multiplicities). Arguing as in part 2) of the proof of Theorem 
\ref{main2}, we see that $(\la_1\mu_1)^r, \ldots ,(\la_n\mu_n)^r$ are all the 
eigenvalues of $s^rt^r = z$ (with counting multiplicities). More precisely, if
$V$ is the natural module for $\SL^{\eps}_n(q)$ over $\bar{\F}_q$, then 
there is a decomposition $V = \oplus^n_{i=1}V_i$ such that 
$s^r$ acts on $V_i$ as the scalar multiplication by $\la_i^r$, and 
$t^r$ acts on $V_i$ as the scalar multiplication by $\mu_i^r$. It follows that
for all $i$, $(\la_i\mu_i)^r = \om$ and so $|\la_i\mu_i| = pr$.

\smallskip
2) As in part 3) of the proof of Theorem \ref{main2}, for each $i$ there is a 
smallest integer $k_i$ between $1$ and $n$ such that $\la_i^{q^{k_i}-\eps^{k_i}} = 1$.
Since $p|(q-\eps)$, the minimality of $k_i$ implies that $k_i$ is a $p$-power.
We claim that either

(i) $|\la_i| = rk_i/p$, or

(ii) $k_i = 1$, and $|\la_i|$ divides $r/p^2$.

First consider the case $p>2$. Since $p|(q-\eps)$ and $k_i$ is a $p$-power, we have 
that $(q^{k_i}-\eps^{k_i})_p = k_i \cdot (q-\eps)_p = pk_i$, and so $|\la_i|$ divides
$pk_i = rk_i/p$. Suppose that (i) does not hold. Then $|\la_i|$ divides 
$k_i$. If $k_i = 1$, then $|\la_i|=1=r/p^2$, and so (ii) holds.  
If $k_i > 1$, then $p|k_i$. In this 
case, $(q^{k_i/p}-\eps^{k_i/p})_p = (k_i/p) \cdot (q-\eps)_p = k_i$, whence 
$\la_i^{q^{k_i/p}-\eps^{k_i/p}} = 1$, contrary to the choice of $k_i$.  

Next assume that $p=2$. Since $4|(q-\eps)$ and $k_i$ is a $2$-power, we have that
$(q^{k_i}-\eps^{k_i})_2 = k_i \cdot (q-\eps)_2 = 4k_i$ and so $|\la_i|$ divides
$4k_i = rk_i/2$. Suppose that (i) does not hold. Then $|\la_i|$ divides 
$2k_i$. If $k_i = 1$, then $|\la_i|$ divides $2 = r/4$, and so (ii) holds.  
If $k_i \geq 2$, then $2|k_i$. In this 
case, $(q^{k_i/2}-\eps^{k_i/2})_2 = (k_i/2) \cdot (q-\eps)_2 = 2k_i$, whence 
$\la_i^{q^{k_i/2}-\eps^{k_i/2}} = 1$, contrary to the choice of $k_i$, and the claim
follows.

Similarly, for each $i$ there is a smallest integer $\ell_i$ between $1$ and $n$ 
such that $\mu_i^{q^{\ell_i}-\eps^{\ell_i}} = 1$. The same argument as above shows that 
$\ell_i$ is a $p$-power; furthermore, 
either $|\mu_i| = r\ell_i/p$, or $\ell_i = 1$ and $|\mu_i|$ divides $r/p^2$.

\smallskip
3) Consider the map $J$ on $\bar{\F}_q$ defined via $J(\al) = \al^{q\eps}$. Then
the $J$-orbit of $\la_i$ has length $k_i$, and the $J$-orbit of $\mu_i$ has 
length $\ell_i$. Next we define:
$$X = \{i \mid 1 \leq i \leq n,~p^2|k_i,~p^2|\ell_i\},~~~   
  Y = \{i \mid 1 \leq i \leq n,~p^2|k_i,~p^2\not{|}\ell_i\},$$
$$Z = \{i \mid 1 \leq i \leq n,~p^2\not{|}k_i,~p^2|\ell_i\},~~~   
  T = \{i \mid 1 \leq i \leq n,~p^2\not{|}k_i,~p^2\not{|}\ell_i\}.$$
Here we show that $p^2$ divides $|X|$, $|Y|$, and $|Z|$.

3a) Suppose for instance that $1 \in X$. 
Then $p^2|k_1$, and so by the results of 2),
$|\la_1| = rk_1/p$. Similarly, $|\mu_1| = r\ell_1/p$. Since $s \in \SL^{\eps}_n(q)$,
$J(\la_1)$ is an eigenvalue of $s$, whence $J(\la_1) = \la_i$ for some $i$ between
$1$ and $n$. We now show that {\it every $i$ with $\la_i = J(\la_1)$ must 
belong to $X$}. Indeed, 
$|\la_i| = |\la_1^{q\eps}| = |\la_1| = rk_1/p$ is divisible by $p^3$. So by 
2), $|\la_i| = rk_i/p$, and $k_i = k_1$ is divisible by $p^2$. Next, since 
$|\om| = p$ divides $q-\eps$, we see that $\om^{q\eps} = \om$. Hence,
$$\la_i^r\mu_i^r = \om = \om^{q\eps} = ((\la_1\mu_1)^r)^{q\eps} = 
  (\la_1^{q\eps})^r(\mu_1)^{rq\eps} = \la_i^r (\mu_1)^{rq\eps},$$
and so $\mu_i^r = (\mu_1)^{rq\eps}$, whence $|\mu_i^r| = |\mu_1^r|$. On the other 
hand, $|\mu_1^r| = \ell_1/p$ is divisible by $p$. It follows that 
$|\mu_i^r| = \ell_1/p$ is divisible by $p$ and so $|\mu_i| = r\ell_1/p$ is divisible
by $p^3$. Again by 2), we now have that $|\mu_i| = r\ell_i/p$, and 
$\ell_i = \ell_1$ is divisible by $p^2$. Thus $i \in X$. 

Repeating this argument with $\la_1$ replaced by $J^b(\la_1)$ for any $b$, we 
see that every $i$ with $\la_i$ in the $J$-orbit of $\la_1$ must belong to 
$X$. Certainly, all these $\la_i$ occur with the same multiplicity in
the spectrum of $s$ on $V$.  We have shown that $J$ acts on the set 
$\{V_i \mid i \in X\}$ (according to the 
action of $J$ on the $\la_i$'s), and each orbit has length divisible by $p^2$. 
Therefore $p^2$ divides $|X|$.

3b) Similarly, now suppose for instance that $1 \in Y$. 
Then $p^2|k_1$, and so by 2), $|\la_1| = rk_1/p$. As above, 
$J(\la_1) = \la_i$ for some $i$ between $1$ and $n$. We now show that {\it every 
$i$ with $\la_i = J(\la_1)$ must belong to $Y$}. Indeed, 
$|\la_i| = |\la_1^{q\eps}| = |\la_1| = rk_1/p$ is divisible by $p^3$. So by 
2), $|\la_i| = rk_i/p$, and $k_i = k_1$ is divisible by $p^2$. Next, as above we 
also have $\mu_i^r = (\mu_1)^{rq\eps}$, whence $|\mu_i^r| = |\mu_1^r|$. Since 
$1 \in Y$, $\ell_1 = p$ or $\ell_1 = 1$. By the results of 2), $\mu_1^r = 1$, 
and so $\mu_i^r = 1$. This in turn implies by 2) that $p^2 {\!\not{|}} \ell_i$,
i.e. $i \in Y$. 

Repeating this argument with $\la_1$ replaced by $J^b(\la_1)$ for any $b$, we 
see that every $i$ with $\la_i$ in the $J$-orbit of $\la_1$ must belong to 
$Y$. As before, all these $\la_i$ occur with the same multiplicity in
the spectrum of $s$ on $V$. We have shown that 
$J$ acts on the set $\{V_i \mid i \in Y\}$ (according to the action of $J$ on
the $\la_i$'s), and each orbit has length divisible by $p^2$, hence $p^2$ divides 
$|Y|$.

Reversing the roles of $\la_i$'s and $\mu_i$'s, we see that $p^2$ divides $|Z|$. 

\smallskip
4) We have shown that $p^2$ divides $|X|$, $|Y|$, and $|Z|$. But 
$|X|+|Y|+|Z|+|T| = n \equiv p (\mod p^2)$, hence $T \neq \emptyset$. Thus 
we can find $m$ such that $p^2 {\!\not{|}} k_m$ and 
$p^2 {\!\not{|}} \ell_m$. By the 
results of 2), this implies that $\la_m^r = 1 = \mu_m^r$. But in this case,
$\om = (\la_m\mu_m)^r = 1$, a contradiction.   
\end{proof}

\begin{thm}\label{main4}
{\rm (i)} Suppose $n \equiv 3 (\mod 9)$, $\eps = \pm 1$, and 
$q \equiv 3+\eps (\mod 9)$. 
Then $x^9y^9$ is not surjective on $\SL^{\eps}_{n}(q)$.

{\rm (ii)} Suppose $n \equiv 1 (\mod 2)$ and $q \equiv 5 (\mod 8)$. 
Then $x^8y^8$ is not surjective on $\Sp_{2n}(q)$.

{\rm (iii)} Suppose $n \equiv 1 (\mod 2)$, 
$\eps = \pm$ and $q \equiv 4 +\eps 1 (\mod 8)$. 
Then $x^8y^8$ is not surjective on $\Omega^{\eps}_{2n}(q)$ and 
$\Spin^{\eps}_{2n}(q)$.

{\rm (iv)} Suppose $q \equiv \pm 3 (\mod 8)$ and $n \geq 2$. 
Then $x^{2^{n+2}}y^{2^{n+2}}$ is not surjective on $\Spin_{2n+1}(q)$.

{\rm (v)} Suppose $\eps = \pm 1$ and $q \equiv 3+\eps (\mod 9)$. 
Then $x^{81}y^{81}$ is not surjective on the simply connected group 
$E_6^{\eps}(q)$.

{\rm (vi)} Suppose $q \equiv \pm 3 (\mod 8)$. Then $x^{128}y^{128}$ is not 
surjective on the simply connected group $E_{7}(q)$.
\end{thm}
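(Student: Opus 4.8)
Parts (i) and (iv)--(vi) reduce to Theorems~\ref{main2} and \ref{main3}, which are already available. Part (i) is the case $p=3$ of Theorem~\ref{main3}: the hypotheses $n\equiv 3\pmod 9$ and $q\equiv 3+\eps\pmod 9$ are exactly $n\equiv p\pmod{p^2}$ and $q\equiv p+\eps\pmod{p^2}$, and then $p^a=3^2=9$. For (iv)--(vi) the plan is to produce, in the defining characteristic, a faithful representation $\Phi\colon G\to\GL^{\eps'}_N(q')$ carrying the relevant central element $z$ of $G$ to a scalar matrix $\omega I_N$ of prime order $p$, and then to apply Theorem~\ref{main2}(ii). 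For $G=\Spin_{2n+1}(q)$ with $n\ge 2$ take the $2^n$-dimensional spin module over $\F_q$, on which $z\mapsto -I_{2^n}$; here $p=2$, $N=2^n$, and since $q\equiv\pm 3\pmod 8$ forces $(q^2-1)_2=8$, the formula of Theorem~\ref{main2}(ii) gives $a=\lfloor\log_2 2^n\rfloor-1+\log_2 8=n+2$. For $G=E_6^{\eps}(q)_{\sc}$ take the $27$-dimensional minuscule module (over $\F_q$, and landing in $\GU_{27}(q)$ when $\eps=-1$), on which the central element of order $3$ acts by a primitive cube root of unity; with $p=3$, $N=27$ and $(q-\eps)_3=3$ one gets $a=\lfloor\log_3 27\rfloor+\log_3 3=4$, so $81=3^a$. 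For $G=E_7(q)_{\sc}$ take the $56$-dimensional minuscule module over $\F_q$, on which $z\mapsto -I_{56}$; with $p=2$, $N=56$, $(q^2-1)_2=8$ one gets $a=\lfloor\log_2 56\rfloor-1+\log_2 8=7$, so $128=2^a$. In each case $\Phi$ is faithful, so the conclusion of Theorem~\ref{main2}(ii) transfers from $\GL^{\eps'}_N(q')$ back to $G$.

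For (ii) and (iii) one always has $q\equiv\pm 3\pmod 8$, and the central element at issue is $-I$ on the natural module $V=\F_q^{2n}$: for $\Sp_{2n}(q)$ this is automatic, while for $\Omega^{\eps}_{2n}(q)$ (with $q\equiv 5\pmod 8$ if $\eps=+1$ and $q\equiv 3\pmod 8$ if $\eps=-1$) the fact that $n$ is odd gives $\eps=(-1)^{n(q-1)/2}$, so $-I\in\Omega^{\eps}_{2n}(q)$. Assume $-I=x^8y^8$. Since $-I$ is a semisimple $2$-element, a Jordan-decomposition-and-$p$-part argument as in part~2) of the proof of Theorem~\ref{main2} and part~1) of the proof of Theorem~\ref{main3} reduces us to the case that $x=s$ and $y=t$ are semisimple $2$-elements of $G$ with $s^8t^8=-I$, i.e.\ $s^8=-(t^8)^{-1}$. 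For a semisimple $2$-element $g$ of $\Sp_{2n}(q)$ or of $\SO^{\eps}_{2n}(q)$, write $V\otimes\bar\F_q$ as an orthogonal direct sum $\bigoplus_{m\ge 0}V^{(m)}(g)$, where $V^{(m)}(g)$ is spanned by the eigenspaces for eigenvalues of order exactly $2^m$. Because $q\equiv\pm 3\pmod 8$, for every $m\ge 3$ the order of $q$ in $(\Z/2^m)^{\times}$ is $2^{m-2}$ and $-1\notin\langle q\rangle$ there, so the Frobenius orbits on order-$2^m$ eigenvalues have length $2^{m-2}$ and are disjoint from their inverse orbits; hence $V^{(m)}(g)$ is a sum of $\GL$-type blocks of dimension $2^{m-1}$, say $\dim V^{(m)}(g)=2^{m-1}\beta_m(g)$. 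Also $\dim V^{(2)}(g)=2\beta_2(g)$, while $\dim V^{(0)}(g)=2\alpha_0(g)$ and $\dim V^{(1)}(g)=2\alpha_1(g)$ are even (non-degeneracy for $\Sp$; determinant $1$ for $\SO^{\eps}$). Therefore
\[
 n=\alpha_0(g)+\alpha_1(g)+\beta_2(g)+2\beta_3(g)+\sum_{m\ge 4}2^{m-2}\beta_m(g).
\]

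Raising to the $8$th power, $g^8=1$ on $V^{(m)}(g)$ for $m\le 3$, $g^8=-1$ on $V^{(4)}(g)$, and $g^8$ has eigenvalues of order $\ge 4$ on $V^{(m)}(g)$ for $m\ge 5$; so $\ker(g^8-1)=\bigoplus_{m\le 3}V^{(m)}(g)$ has dimension $2\big(\alpha_0(g)+\alpha_1(g)+\beta_2(g)+2\beta_3(g)\big)$, and $\ker(g^8+1)=V^{(4)}(g)$ has dimension $8\beta_4(g)$. From $s^8=-(t^8)^{-1}$ one gets $\ker(s^8-1)=\ker(t^8+1)$, hence $\alpha_0(s)+\alpha_1(s)+\beta_2(s)+2\beta_3(s)=4\beta_4(t)$, so $n=4\beta_4(t)+\sum_{m\ge 4}2^{m-2}\beta_m(s)$ is divisible by $4$, contradicting $n$ odd. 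This proves (ii) and the $\Omega^{\eps}_{2n}(q)$ part of (iii). For $\Spin^{\eps}_{2n}(q)$ it suffices to note that, since $-I\in\Omega^{\eps}_{2n}(q)$, the centre of $\Spin^{\eps}_{2n}(q)$ contains elements lying over $-I$; were such an element equal to $x^8y^8$, projection to $\Omega^{\eps}_{2n}(q)$ would express $-I$ as a product of two $8$th powers, which we have just ruled out, so $x^8y^8$ is not surjective on $\Spin^{\eps}_{2n}(q)$.

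The arithmetic in (i) and (iv)--(vi) is routine once the standard facts about minuscule and basic spin modules (dimension, central character, field of definition) are in hand. The real work is the eigenvalue bookkeeping behind (ii)--(iii): I must check carefully the orbit-length and non-inverse-closure statements for $2$-power roots of unity under Frobenius when $q\equiv\pm 3\pmod 8$, the evenness of $\dim V^{(0)}(g)$ and $\dim V^{(1)}(g)$, and the passage from $\dim\ker(s^8-1)=\dim\ker(t^8+1)$ to $4\mid n$; the reduction to semisimple $2$-elements (which is precisely where the hypothesis that $-I$ is a genuine element of $G$, and is central, gets used) also needs care, but is modelled on the proofs of Theorems~\ref{main2} and \ref{main3}.
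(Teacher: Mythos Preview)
Your proof is correct. For parts (i), (iv), (v), (vi) you follow exactly the paper's route: (i) is the case $p=3$ of Theorem~\ref{main3}, and (iv)--(vi) reduce to Theorem~\ref{main2}(ii) via the spin and minuscule embeddings into $\SL^{\eps'}_N(q)$, with the same arithmetic for $a$.

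For (ii) and (iii) you take a genuinely different path. The paper simply embeds $\Sp_{2n}(q)$ (respectively $\Omega^{\eps}_{2n}(q)$) into $G=\SL_{2n}(q)$ (respectively $\SL^{\eps}_{2n}(q)$), notes that $-I$ generates $O_2(Z(G))$, and invokes the \emph{proof} of Theorem~\ref{main3} with $p=2$ in the ambient linear group. You instead work intrinsically inside $\Sp$/$\Omega$: after the same Jordan-plus-$2$-part reduction you exploit the extra eigenvalue symmetry $\lambda\leftrightarrow\lambda^{-1}$ together with the Frobenius orbit structure (using that for $q\equiv\pm 3\pmod 8$ and $m\ge 3$ the class of $q$ has order $2^{m-2}$ in $(\Z/2^m)^{\times}$ and $-1\notin\langle q\rangle$) to force $\dim V^{(m)}(g)\in 2^{m-1}\Z$, and then the single identity $\ker(s^8-1)=\ker(t^8+1)$ yields $4\mid n$. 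The paper's argument is shorter because Theorem~\ref{main3} is already available; your argument is self-contained, makes explicit precisely which structural features of $\Sp$ and $\Omega$ are used, and avoids having to discuss how $\Omega^{-}_{2n}(q)$ sits inside $\SL^{-}_{2n}(q)$. One point worth making explicit in your write-up: when you transplant the reduction to semisimple $2$-elements from the $\GL$ setting, say why the semisimple part and the $2$-primary part of an element of $\Sp_{2n}(q)$ or $\Omega^{\eps}_{2n}(q)$ remain in that group (they are powers of the element; for $\Omega$ the unipotent part has odd order, hence trivial spinor norm, so the semisimple part lies in $\Omega$ as well).
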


\begin{proof}
(i) This is the particular case $p = 3$ of Theorem \ref{main3}.

(ii) Embed $\Sp_{2n}(q)$ in $G = \SL_{2n}(q)$. Then the central involution 
$z$ of $\Sp_{2n}(q)$ is the unique generator of $O_2(Z(G))$. 
Now the proof of Theorem \ref{main3} shows that
$z \neq x^8y^8$ for all $x,y \in G$.  

(iii) The conditions on $(n,q,\eps)$ imply that 
$Z(\Omega^{\eps}_{2n}(q)) = \langle z \rangle \cong C_2$ and 
$Z(\Spin^{\eps}_{2n}(q)) \cong C_4$. Embed $\Omega^{\eps}_{2n}(q)$ in 
$G = \SL^{\eps}_{2n}(q)$. Then the central involution 
$z$ of $\Omega^{\eps}_{2n}(q)$ is the unique generator of $O_2(Z(G))$. 
Now we can argue as in (ii).

(iv) The {\it spin} representation embeds $\Spin_{2n+1}(q)$ (irreducibly)
into $G = \SL_{2^n}(q)$, so that $Z(G)$ contains the central involution
$z$ of $\Spin_{2n+1}(q)$, cf. \cite[Proposition 5.4.9]{KL}. 
Now we can apply Theorem \ref{main2}(ii). 

(v) One can embed $E^{\eps}_6(q)$ (irreducibly) into $G = \SL^{\eps}_{27}(q)$, 
so that $Z(G)$ contains a central element $t$ of $E^{\eps}_6(q)$ of order
$3$, cf. \cite[Proposition 5.4.17]{KL}. Now apply Theorem \ref{main2}(ii).

(vi) One can embed $E_7(q)$ (irreducibly) into $G = \SL_{56}(q)$, so that $Z(G)$ 
contains the central involution $z$ of $E_7(q)$, cf. 
\cite[Proposition 5.4.18]{KL}. Now apply Theorem \ref{main2}(ii).
\end{proof}

\bigskip

\section{Products of squares in finite quasisimple groups}
Corollary \ref{4p} leaves out the word $x^2y^2$. It was shown in 
\cite{LBST2} (and independently in \cite{GM}) that $x^2y^2$ is surjective 
on any 
non-abelian simple group. It turns out that this word
is also surjective on any finite quasisimple group:

\begin{thm}\label{main5}
Let $G$ be any finite quasisimple group. Then $x^2y^2$ is surjective on $G$.
\end{thm}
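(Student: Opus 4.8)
The plan is to reduce the problem, via the classification of finite simple groups applied to $T := G/Z(G)$, to a bounded amount of casework together with the surjectivity of $x^2y^2$ on finite simple groups proved in \cite{LBST2, GM}. The starting observation is that for $g \in G$ with image $\bar g \in T$, we know $\bar g = \bar a^2 \bar b^2$ for suitable $\bar a, \bar b \in T$, so any lift satisfies $g = a^2 b^2 z$ for some $z \in Z(G)$. Thus it suffices to show that for every central element $z$ (equivalently, after reduction, for a generator of each cyclic factor of $Z(G)$), the element $zg$ is again a product of two squares whenever $g$ is; since $g$ was arbitrary in its coset, it is enough to show $Z(G) \subseteq w_2(G)$ where $w_2 = x^2y^2$, {\it provided} one also controls the interaction, e.g. by showing $z \cdot a^2 = c^2$ for a conjugate $c$ of $a$, or more robustly that $w_2(G)$ is closed under multiplication by $Z(G)$. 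So the first reduction step is: \emph{it suffices to prove that $Z(G) \subseteq w_2(G)$, and moreover that $w_2(G)$ is a union of cosets of $Z(G)$}; the latter will follow once we know $z$ is a product of two squares of $Z(G)$-central elements or, more simply, from a squaring argument showing $z a^2 z' $-type manipulations stay inside $w_2(G)$.

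Next I would dispose of the generic Lie-type cases using the width-two theorems already established in this paper: Theorem~\ref{main6} (covers of $\Omega_{2n+1}(q)$), Theorem~\ref{main7} and Theorem~\ref{main8} (spin and symplectic groups of large rank), and Theorem~\ref{alt} (covers of alternating groups) all give $w_1(G)w_2(G) = G$, hence in particular $x^2y^2$ surjective, for $G$ of sufficiently large rank in those families. For the type $A$ families $\SL_n^\eps(q)$ and for covers of exceptional groups, and for all the remaining bounded-rank Lie-type groups and the sporadic covers, one is reduced to a bounded collection of central extensions: the Schur multiplier is bounded in bounded rank, so the relevant $z$ has order in a fixed finite set, and the problem becomes showing that a specific central element of a specific quasisimple group is a product of two squares. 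Here the plan is a direct argument inside $\SL_n^\eps(q)$ and $\Sp_{2n}(q)$: exhibit explicit elements (e.g. using block-diagonal constructions, companion matrices of suitable polynomials, or an involution-times-central-element trick as in \cite{LBST2}) whose squares multiply to the central element; for the genuinely small groups (the exceptional Schur multipliers, low-rank cases, sporadic covers) appeal to direct computation — this is exactly where the acknowledged computational help of Eamonn O'Brien enters.

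The main obstacle will be the central elements themselves in the bounded-rank type $A$ and exceptional cases, precisely because Theorem~\ref{main2} shows that for $k > 2$ these central elements genuinely obstruct $x^k y^k$; one must use something special about squaring. The key structural input is that $-1 = i^2$ is available — more concretely, in $\SL_n(q)$ or $\SU_n(q)$ a central element $\omega I_n$ of $2$-power order $2^b$ can be written as $(\zeta I_n)^2$ for a scalar $\zeta$ of order $2^{b+1}$, but $\zeta I_n$ need not lie in $\SL_n$ unless $\zeta^n = 1$, so the honest task is to split $\omega I_n$ as a product of two squares of elements of the group rather than a single square. The trick (following \cite{LBST2}) is to pick a real conjugacy class: find $a \in G$ with $a$ conjugate to $a^{-1}$, so $a = (xa x^{-1})^{\pm 1}$ arrangements let $a^2 \cdot$ (conjugate of $a^{-2}$) realize a commutator, and then combine with the known factorization of $\bar z$-cosets downstairs; equivalently, use that in these small groups every element is a product of two squares by a census of real/strongly-real classes. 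I expect the orthogonal and spin cases of odd or bounded rank (entangled with the $C_2^2$ center of $\Spin^\eps_{2n}$) and the exceptional groups $E_6^\eps, E_7$ to require the most care, and the final writeup will combine the uniform rank-large arguments above with a finite check — assisted by computer — for the remaining finitely many quasisimple groups.
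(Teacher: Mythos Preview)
Your reduction step is the real gap. You propose to use the simple-group result to write $g = a^2 b^2 z$ with $z \in Z(G)$ and then argue that it suffices to show $Z(G) \subseteq w_2(G)$ together with $w_2(G)$ being a union of $Z(G)$-cosets. But the second condition is essentially equivalent to the theorem itself, and none of your suggested justifications work: if $z = c^2 d^2$ with $c,d$ central then $z$ is already a square in the abelian group $Z(G)$, which fails for $2$-elements of maximal order; replacing $a$ by $az'$ only adjusts the defect by a \emph{square} in $Z(G)$; and the real-class trick produces commutators, not the specific central element you need. So knowing $z \in w_2(G)$ does not let you combine $a^2b^2$ with it.

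The paper avoids this entirely. After reducing to $Z(G)$ a $2$-group, it invokes Ellers--Gordeev \cite{EG}: in a quasisimple group of Lie type in \emph{odd} characteristic, every non-central element is a product of two unipotent elements, which have odd order and are therefore squares. This single stroke handles all non-central elements at once, with no lifting argument and no appeal to Theorems~\ref{main6}--\ref{main8}. Only the central $2$-elements then remain, and for those the paper gives explicit torus constructions in $\SL_n^\eps(q)$ and propagates them to $\Sp$, $\Spin$, $E_7$ via standard embeddings. Your plan to lean on Theorems~\ref{main6}--\ref{main8} for large rank is also problematic in detail: those results carry side conditions ($\eps \neq (-1)^{n(q-1)/2}$ in Theorem~\ref{main7}, $2^N \mid n$ in Theorem~\ref{main8}) that leave infinitely many large-rank groups uncovered, and they say nothing about type~$A$. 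The missing idea is precisely the Ellers--Gordeev input, which makes the non-central case trivial and reduces everything to the explicit central computations you already anticipated.
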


\begin{proof}
1) Without loss we may assume that $Z(G)$ is a nontrivial $2$-group. (Indeed,
if $Z_1 = O_{2'}(Z(L))$, then the center of the quasisimple group $G/Z_1$ is
a $2$-group. Applying the main result of \cite{LBST2} in the case 
$Z(G/Z_1) = 1$ and our hypothesis otherwise, we see that for any $g \in G$, there 
are $x,y \in G$ and $t \in Z_1$ such that $g = x^2y^2t$. Since $|Z_1|$ is odd,
$t = z^2$ for some $z \in Z_1$, and so $g = (xz)^2y^2$.) So in what follows
we will consider only such finite quasisimple groups $G$, and let $S := G/Z(G)$.

Recall that $g \in G$ is a product of two squares if and only if 
$$(\star)~:~\sum_{\chi~ = ~\bar{\chi}~ \in \Irr(G)}\frac{\chi(g)}{\chi(1)} \neq 0.$$
Now let $\cS$ be the collection of the following simple groups:
$\AAA_n$, $5 \leq n \leq 13$, $\Sp_6(2)$, $G_2(4)$, $F_4(2)$, 
$\PSL_3(4)$, $\PSU_6(2)$, $^2E_6(2)$, $\Omega^+_8(2)$, $^2B_2(8)$, 
$M_{12}$, $M_{22}$, $J_2$, $HS$, $Suz$, $Ru$, $Co_1$, $Fi_{22}$, $BM$.
Using the criterion $(\star)$ and character tables available in {\sf GAP} 
and {\sf Magma}, Eamonn O'Brien has checked the statement in the cases
$S \in \cS$. Thus we may assume $S \notin \cS$. Then either 

(i) $G = 2\AAA_n$ with $n \geq 14$, or 

(ii) $G$ is a quotient of a quasisimple group $L$ of Lie type in 
{\it odd} characteristic $p$ of simply connected type.

\smallskip
2) We will handle the case (i) of alternating groups by induction 
on $n$. Observe that any 
element in $2\AAA_4 \cong \SL_2(3)$ is a product of two squares. Hence 
the induction base $4 \leq n \leq 13$ has been established. Following 
the approach of \cite{LBST2}, we say that $g \in 2\AAA_n$ is 
{\it breakable}, if $g = xy$ lies in a central product $2\AAA_r * 2\AAA_{n-r}$
(so $x \in 2\AAA_r$ and $y \in 2\AAA_{n-r}$) with $4 \leq r \leq n-4$. 
Applying the induction hypothesis to $r$ and $n-r$, 
we see that any breakable element 
in $2\AAA_n$ is a product of two squares. Assume $g \in 2\AAA_n$ is 
unbreakable. The proof of Lemma 3.3 of \cite{LBST2} shows that $g$ 
must satisfy 
one of the conclusions of this Lemma; in particular,
$|C_{\AAA_n}(g)| \leq (3/4) \cdot (n-3)^2$. Now Lemmas 3.4 and 3.6 of 
\cite{LBST2}
imply that 
$$\sum_{1_G \neq \chi \in \Irr(G)} \frac{|\chi(g)|}{|\chi(1)|} < 0.876.$$ 
Hence $g$ satisfies $(\star)$ and so $g$ is a product of two squares. Thus
the induction step has been completed.

\smallskip
3) From now on we may assume that we are in the case (ii). By \cite[\S4]{EG},
every non-central element $g \in L$ is a product of two unipotent elements
$g = uv$. Then $u$ and $v$ are of odd order and so they are squares in $L$. 
It remains therefore to show that any non-trivial $2$-element in $Z(L)$ is 
a product of two squares. 

Here we consider the case $L = \SL^{\eps}_{n}(q)$ and 
assume that $z = \om I_n \in Z(L)$ has order $2^a \geq 2$. Denote 
$(q-\eps)_2 = 2^b$, so that $a \leq b$ and $2^a|n$. Embedding $z$ into
a direct product 
$$\SL^{\eps}_{2^a}(q) \times \SL^{\eps}_{2^a}(q) \times \ldots \times 
  \SL^{\eps}_{2^a}(q) < L,$$
we may furthermore assume that $n = 2^a$. 

Suppose first that $b \geq a+1$. Then the diagonal torus $C_{q-\eps}^n$ of 
$\GL^{\eps}_{n}(q)$ contains the element 
$$x = \diag(\al, \al, \ldots, \al, -\al)$$ 
with $\al^2 = \om$. Now $\det(x) = -\al^{2^a} = -\om^{2^{a-1}} = 1$, i.e.
$x \in L$, and $x^2 = z$.

Now suppose that $b = a = 1$; in particular, $L \cong SL_2(q)$. Then the cyclic 
torus $C_{q+\eps}$ contains an element $y$ of order $4$, and $y^2 = z$.

We may now assume that $b = a \geq 2$. Since $n = 2^a \geq 4$, we have 
$$(q^{n/2}-\eps^{n/2})_2 = (q^2-1)_2 \cdot \frac{n}{4} = 2^{2a-1}.$$
Choose $\gam \in \bar{\F}_q$ of order $2^{2a-1}$ such that $\om = \gam^{2^{a-1}}$.
Now the cyclic torus $C_{q^{n/2}-\eps^{n/2}}$ of $\GL^{\eps}_{n/2}(q)$ contains 
an element $t$ of order $2^{2a-1}$ (conjugate to 
$$\diag(\gam,\gam^{q\eps}, \ldots, \gam^{(q\eps)^{n/2-1}})$$
over $\bar{\F}_q$). Set 
$$x := \diag(t^{2^{a-2}}, t^{3 \cdot 2^{a-2}}),~~~
  y = \diag(I_{n/2-1},-1,\om^{-1}I_{n/2}).$$
Then $x \in \GL^{\eps}_n(q)$ and 
$$\det(x) = (\det(t))^{2^a} = \gam^{2^a \cdot \frac{(q\eps)^{n/2}-1}{q\eps-1}} = 1,$$
i.e. $x \in L$. By its choice, $y$ lies in the diagonal torus $C_{q-\eps}^n$ of 
$\GL^{\eps}_n(q)$ and $\det(y) = -\om^{-n/2} = 1$, i.e. $y \in L$. 
It remains to observe that $z = x^2y^2$.

In particular, we have shown that any element in $\SL_2(q)$ with $q \geq 3$ odd 
is a product of two squares.

\smallskip
4) Now we will consider the remaining Lie-type groups $L$ of odd characteristic.
If $L = \Sp_{2n}(q)$, then we can embed the central involution $z$ of $L$ into
a direct product $\Sp_2(q) \times \ldots \times \Sp_2(q) < L$ and apply the 
result of 3) to each factor $\Sp_2(q) \cong \SL_2(q)$. 

Suppose $L = \Spin_n(q)$ with $n \geq 7$ odd, or $L = \Spin^{\eps}_n(q)$ with 
$n \geq 8$ even and $q^{n/2}- \eps \equiv 2 (\mod 4)$. Then the central involution
$z$ of $L$ can be embedded in $\Spin_3(q) \cong \SL_2(q)$ (see 
\cite[Lemma 4.1]{LBST2}), and so we can apply the results of 3) 
to $\SL_2(q)$.  

Suppose $L = \Spin^{\eps}_{2n}(q)$ with $n \geq 4$ and $4|(q^n - \eps)$. 
Assume in addition that $2|n$ (and so $\eps = +$). Then embed 
$Z(\Omega^+_{2n}(q)) \cong C_2$ in a direct product
$$\Omega^+_{4}(q) \times \ldots \times \Omega^+_4(q) < \Omega^+_{2n}(q).$$ 
By \cite[Lemma 4.1]{LBST2}, this embeds $Z(L)$ (of order $4$) in the central
product  
$$\Spin^+_{4}(q) * \ldots * \Spin^+_4(q) < L.$$     
Since $\Spin^+_4(q) \cong \SL_2(q) \times \SL_2(q)$, we can now apply the 
results of 3) to each factor $\Spin^+_4(q)$. Now assume that $n$ is odd, 
whence $n \geq 5$ and $4|(q-\eps)$. Then embed 
$Z(\Omega^{\eps}_{2n}(q)) \cong C_2$ in a direct product
$$\Omega^+_{4}(q) \times \ldots \times \Omega^+_4(q) \times \Omega^{\eps}_6(q) < 
  \Omega^{\eps}_{2n}(q).$$ 
By \cite[Lemma 4.1]{LBST2}, this embeds $Z(L)$ (of order $4$) in the central
product  
$$\Spin^+_{4}(q) * \ldots * \Spin^+_4(q) * \Spin^{\eps}_6(q) < L.$$     
Since $\Spin^+_4(q) \cong \SL_2(q) \times \SL_2(q)$ and 
$\Spin^{\eps}_6(q) \cong \SL^{\eps}_4(q)$, we can now apply the 
results of 3) to each factor of the last subgroup.

It remains to consider the central involution $z$ of the simply connected group
$L$ of type $E_7(q)$. By \cite[Lemma 5.1]{LBST2}, we can embed $Z(L)$ into 
$\Spin^+_{12}(q)$, and so we are done by the previous step. 
\end{proof}

\newpage

\end{document}